\tikzset{
	on each segment/.style={
		decorate,
		decoration={
			show path construction,
			moveto code={},
			lineto code={
				\path [#1]
				(\tikzinputsegmentfirst) -- (\tikzinputsegmentlast);
			},
			curveto code={
				\path [#1] (\tikzinputsegmentfirst)
				.. controls
				(\tikzinputsegmentsupporta) and (\tikzinputsegmentsupportb)
				..
				(\tikzinputsegmentlast);
			},
			closepath code={
				\path [#1]
				(\tikzinputsegmentfirst) -- (\tikzinputsegmentlast);
			},
		},
	},
	mid arrow/.style={postaction={decorate,decoration={
				markings,
				mark=at position .5 with {\arrow[#1]{stealth}}
	}}},
}
\def\@tocline#1#2#3#4#5#6#7{\relax
  \ifnum #1>\c@tocdepth 
  \else
    \par \addpenalty\@secpenalty\addvspace{#2}%
    \begingroup \hyphenpenalty\@M
    \@ifempty{#4}{%
      \@tempdima\csname r@tocindent\number#1\endcsname\relax
    }{%
      \@tempdima#4\relax
    }%
    \parindent\z@ \leftskip#3\relax \advance\leftskip\@tempdima\relax
    \rightskip\@pnumwidth plus4em \parfillskip-\@pnumwidth
    #5\leavevmode\hskip-\@tempdima
      \ifcase #1
       \or\or \hskip 1em \or \hskip 2em \else \hskip 3em \fi%
      #6\nobreak\relax
    \dotfill\hbox to\@pnumwidth{\@tocpagenum{#7}}\par
    \nobreak
    \endgroup
  \fi}
\def\dd{{\rm d}}
\def\eps{\varepsilon}
\def\e{{\rm e}} 
\def\de{{\partial}}
\def\RR {\mathbb{R}}
\def\TT {\mathbb{T}}
\def\ZZ {\mathbb{Z}}
\def\Re{{\rm Re}}
\newcommand{\norm}[1]{\left\lVert #1 \right\rVert}
\newcommand{\jap}[1]{\left\langle #1 \right\rangle}
\newcommand{\sfE}{\mathsf{E}}
\newcommand{\sfD}{\mathsf{D}}
\newcommand{\sfG}{\mathsf{G}}
\newcommand{\sfT}{\mathsf{T}}
\newcommand{\sfS}{\mathsf{S}}
\newcommand{\sfR}{\mathsf{R}}
\newcommand{\sfsym}{\mathsf{sym}}
\newcommand{\sfho}{\mathsf{h.o.}}
\newcommand{\cS}{\mathcal{S}}
\newcommand{\cL}{\mathcal{L}}
\newcommand{\cF}{\mathcal{F}}
\newcommand{\cI}{\mathcal{I}}
\newcommand{\cT}{\mathcal{T}}
\newcommand{\cJ}{\mathcal{J}}
\newtheorem{proposition}{Proposition}[section]
\newtheorem{theorem}[proposition]{Theorem}
\newtheorem{lemma}[proposition]{Lemma}
\theoremstyle{definition}
\newtheorem{remark}[proposition]{Remark}
\theoremstyle{definition}
\numberwithin{equation}{section}
\title[Stability threshold of Couette flow in in 2D MHD]{Stability threshold of the 2D Couette flow in a homogeneous magnetic field using symmetric variables}
\author[M. Dolce]{Michele Dolce}
\address{Institute of Mathematics, EPFL, Station 8, 1015 Lausanne, Switzerland}
\email{michele.dolce@epfl.ch}
\begin{document}
\maketitle
\begin{abstract}
We consider a 2D incompressible and electrically conducting fluid in the domain $\mathbb{T}\times\mathbb{R}$. The aim is to quantify stability properties of the Couette flow $(y,0)$ with a constant homogenous magnetic field $(\beta,0)$ when $|\beta|>1/2$. The focus lies on the regime with small fluid viscosity $\nu$, magnetic resistivity $\mu$ and we assume that the magnetic Prandtl number satisfies $\mu^2\lesssim\mathrm{Pr}_{\mathrm{m}}=\nu/\mu\leq 1$. We establish that small perturbations around this steady state remain close to it, provided their size is of order $\eps\ll\nu^{2/3}$ in $H^N$ with $N$ large enough. Additionally, the vorticity and current density experience a  transient growth of order $\nu^{-1/3}$ while converging exponentially fast to an $x$-independent state after a time-scale of order $\nu^{-1/3}$. The growth is driven by an inviscid mechanism, while the subsequent exponential decay results from the interplay between transport and diffusion, leading to the \textit{dissipation enhancement}. A key argument to prove these results is to reformulate the system in terms of \textit{symmetric variables}, inspired by the study of inhomogeneous fluid, to effectively characterize the system's dynamic behavior.
\end{abstract}

\tableofcontents

\section{Introduction}
The 2D incompressible Navier-Stokes magnetohydrodynamics (NS--MHD) equations on the domain $\TT\times \RR$ are given by 
\begin{align}
\label{eq:utilde}
\begin{cases}\de_t\tilde{u}+\tilde{u}\cdot \nabla \tilde{u}-\tilde{b}\cdot \nabla \tilde{b}=\nu \Delta \tilde{u}-\nabla \tilde{p}, \qquad t>0,\, x\in \TT, \, y\in \RR,\\
\de_t\tilde{b}+\tilde{u}\cdot \nabla \tilde{b}-\tilde{b}\cdot \nabla \tilde{u}=\mu \Delta \tilde{b},\\
\nabla \cdot \tilde{u}=\nabla \cdot \tilde{b}=0\\
\tilde{u}|_{t=0}=\tilde{u}^{in},\qquad \tilde{b}|_{t=0}=\tilde{b}^{in}.
\end{cases}
\end{align}
Here $\tilde{u},\tilde{b}$ are the velocity and magnetic fields, $\tilde{p}$ the pressure, $\nu$ and $\mu$ are the fluid viscosity and magnetic resistivity, which are  proportional to the inverse Reynolds number and inverse magnetic Reynolds number respectively. We consider a nearly ideal system in the regime
\begin{equation}
0<\nu\leq \mu\ll 1, \qquad \Longrightarrow \qquad \mathrm{Pr}_{\mathrm{m}}=\nu/\mu\leq 1,
\end{equation}
where $\mathrm{Pr}_{\mathrm{m}}$ is the magnetic Prandtl number, observed to be of order $10^{-7}-10^{-2}$ in physically relevant cases \cite{riva2022methodology,schumacher2020colloquium}.

A steady state of \eqref{eq:utilde} is the Couette flow with a constant background magnetic field, that is
\begin{equation}
\label{couette}
u_E=(y,0), \qquad b_E=(\beta,0), \qquad \beta \in \mathbb{R}.
\end{equation}
This is problably one of the simplest setting to understand some quantitative  hydromagnetic stability properties of shear flows, which is a problem of significant physical interest \cite{chandrasekhar2013hydrodynamic,hughes2001instability,davidson2002introduction}.  The presence of a background magnetic field could dramatically change stability features of the shear flow considered: i) it can have a destabilizing effect for shear flows that are linearly stable without the magnetic field (as the Couette flow) \cite{chen1991sufficient,hughes2001instability,hirota2005resonance,zhai2021long,hussain2018instability}. ii) it can suppress instabilities as the Kelvin-Helmholtz one \cite{liu2018physical} or lift-up effects in 3D fluids \cite{liss2020sobolev}. 

In this paper, we focus  on quantifying a  \textit{stability threshold} in Sobolev spaces. Following \cite{BGM15III}, the problem can be formulated as follows: 
\medskip 

\begin{quote}\textbf{Stability threshold}: let $N\geq0$, $0<\nu\leq \mu<1$, $(\tilde{u}^{in},\tilde{b}^{in})=(u_E,b_E)+(u^{in},b^{in})$. What is the smallest $\gamma=\gamma(N)\geq 0$ such that if $\norm{(u^{in},b^{in})}_{H^N}= \eps<\nu^\gamma$ then $\norm{(u(t),b(t))}_{L^2}\ll 1$ and $(u(t),b(t))$ converges back to a laminar flow as $t\to\infty$?
\end{quote}

\medskip \noindent
Let us briefly review the literature about related problems. Since Reynolds's famous experiment \cite{reynolds1883xxix}, it is a classical problem in fluid dynamics to understand under which circumstances a laminar flow transitions to a turbulent state. Estimating a stability threshold is a quantitative way to establish when turbulence does not develop.  The idea that the laminar regime persists if the size of the perturbation decreases at large Reynolds number was already predicted by Kelvin in 1887 \cite{kelvin1887stability}. The quantification in terms of powers of the Reynolds number was also linked to the non-normal behavior of the linearized operator around a shear flow in the influential paper by Trefethen et al. \cite{trefethen1993hydrodynamic} and we refer to the book \cite{schmid2002stability} for further developments and references. In the last decade, there has been a significant effort in rigorously proving estimates for the Sobolev stability threshold in many different fluid problems involving the Couette flow \cite{BGM15III,liss2020sobolev,bedrossian2018sobolev,chen2020transition,masmoudi2022stability,wei2021transition,li2022asymptotic,masmoudi2020enhanced,zhai2023stability,li2022dynamical,zillinger2021enhanced} and recently strictly monotone shear flows as well \cite{li2023asymptotic}. Results in this direction are known also for the Poiseuille flow in $\TT\times \RR$ \cite{coti2020enhanced,del2021enhanced} or in $\TT\times[-1,1]$ with Navier-slip boundary conditions \cite{ding2022enhanced}, the Lamb-Oseen vortex \cite{gallay2018enhanced} and the Taylor-Couette flow \cite{an2021enhanced}. For Gevrey-regular perturbations, one can improve the stability threshold \cite{BGM15I,BGM15II,bedrossian2016enhanced} and even study problems in absence of viscosity \cite{masmoudi20bouss,BM15,BBCZD23,ionescu20,ionescu2019axi,Ionescu2020NonlinearID,masmoudi2020nonlinear}. In fact, the groundbreaking result by Bedrossian and Masmoudi \cite{BM15}, proving the \textit{nonlinear inviscid damping} around Couette in 2D Euler,  inspired many of the subsequent works involving strictly monotone shear flows.

For electrically conducting fluids, a stability threshold was first proved by Liss in \cite{liss2020sobolev} in 3D NS-MHD. For the 2D case, recently Zhao and Zi \cite{zhao2023asymptotic} studied the stability of \eqref{couette} with $\nu=0$ (2D Euler-MHD system)  with $|\beta|$ sufficiently large and perturbations of size $O(\mu)$ in the Gevrey-$1/2^{-}$ space. The latter regularity requirement might be necessary for the inviscid problem \cite{knobel2023echoes}. 
For what concerns the 2D NS-MHD system considered here, a Sobolev stability threshold $O(\nu^{5/6^+})$ was first proved by Chen and Zi \cite{chen23} for shear close to Couette when $\nu=\mu$, about which we comment more later on.

To state the main result for the problem studied in this paper, we first need to introduce the vorticity and the current density of the perturbation 
\begin{equation}
\omega=\nabla^\perp \cdot u, \qquad j=\nabla^\perp \cdot b.
\end{equation}
The system satisfied by $(\omega,j)$ is:
\begin{align}
\label{def:omjsystem}
\begin{cases}\de_t\omega+y\de_x\omega-\beta\de_xj-\nu \Delta \omega=\mathrm{NL}_\omega,\\
\de_tj+y\de_xj-\beta\de_x\omega-\mu \Delta j+2\de_{xy}\phi=\mathrm{NL}_j,\\
u=\nabla^\perp \psi, \qquad b=\nabla^\perp \phi,\\
\Delta\psi=\omega, \qquad \Delta \phi=j,\\
\omega|_{t=0}=\omega^{in},\qquad j|_{t=0}=j^{in},
\end{cases}
\end{align}
where 
\begin{align}
&\mathrm{NL}_\omega:=-u\cdot\nabla \omega+b\cdot \nabla j,\\
&\mathrm{NL}_j:=-u\cdot\nabla j+b\cdot \nabla \omega+2\de_{xy}\phi(\omega-2\de_{xx}\psi)-2\de_{xy}\psi(j-2\de_{xx}\phi).
\end{align}
In the following, we denote 
\begin{equation}
f_0(y)=\frac{1}{2\pi}\int_{\mathbb{T}}f(x,y)\dd x, \qquad f_{\neq}=f-f_0, \qquad \jap{a}:=\sqrt{1+a^2}
\end{equation}
We are ready to state the main result.
\begin{theorem}
\label{th:main}
Let $0<\nu\leq \mu\ll1$, $|\beta|>1/2$, $N>10$, and assume that $\nu\geq (16\mu/\beta^2)^3 $. Let $(\omega^{in},j^{in})$ be the initial data of \eqref{def:omjsystem}. Then, there exists a universal constant $0<\delta_0<1$ and $0<\eps_0=\eps_0(N,\beta)<\nu^\frac23$ such that for all $\eps<\eps_0$ the following holds true: if 
\begin{equation}
\norm{(\omega^{in},j^{in})}_{H^N}\leq \eps ,
\end{equation}
denoting $(\Omega,J)(t,x+yt,y)=(\omega,j)(t,x,y)$, we have 
\begin{align}
\label{bd:omj}&\norm{(\Omega_{\neq},J_{\neq})(t)}_{H^{N}}\lesssim\eps \jap{t}\e^{-\delta_0\nu^\frac13 t}\\
\label{bd:invdamp}&\norm{(u^1_\neq,b^1_{\neq})(t)}_{L^2}+\jap{t}\norm{(u^2_\neq,b^2_{\neq})(t)}_{L^2}\lesssim \eps \e^{-\delta_0\nu^\frac13 t},\\
\label{bd:u0b0}& \norm{(u_0,b_0)(t)}_{H^N}+\nu^\frac12\norm{\de_y(u_0,b_0)}_{L^2([0,t];H^N)}\lesssim \eps.
\end{align}
\end{theorem}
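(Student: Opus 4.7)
\medskip

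\textbf{Proof plan.} The plan is to pass to the shear--adapted frame, introduce symmetric (Els\"asser--like) variables that diagonalize the Alfv\'en coupling $\beta\partial_x$, and run a bootstrap in a carefully time--weighted Sobolev space. First, I would work in the coordinates $X=x+yt$, $Y=y$, so that the transport $y\partial_x$ disappears and the Laplacian is replaced by the time--dependent operator $\Delta_L=\partial_X^2+(\partial_Y-t\partial_X)^2$. The system for $(\Omega,J)$ becomes
\begin{align}
\de_t\Omega-\beta\de_X J&=\nu\Delta_L\Omega+\mathrm{NL}_\omega,\\
\de_t J-\beta\de_X\Omega+2\de_X(\de_Y-t\de_X)\Phi&=\mu\Delta_L J+\mathrm{NL}_j,
\end{align}
with $\Delta_L\Psi=\Omega$, $\Delta_L\Phi=J$. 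The nonzero modes are then the main issue: for the $x$--average I would just use the diffusion plus the reaction--free structure of the $0$--mode equations to prove \eqref{bd:u0b0} via a basic energy estimate at the end.

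For the $k\neq 0$ Fourier modes, the idea is to symmetrize the skew--symmetric block $\beta\de_X$ by the Els\"asser--type variables $Z_\pm=\Omega\pm J$, along the lines suggested by the analogy with inhomogeneous/Boussinesq fluids. At the linear level $Z_\pm$ solve a half--Alfv\'en wave $\de_t Z_\pm=\pm\beta\de_X Z_\pm$ plus lower--order coupling produced by: (i) the mismatch $\nu\neq\mu$, which transforms into an $O(|\nu-\mu|)$ off--diagonal dissipative coupling; (ii) the nonlocal term $2\de_X(\de_Y-t\de_X)\Phi$, which I would control using the elliptic regularity $\Phi=\Delta_L^{-1}J$ together with the decay of the symbol $|k|/(k^2+(\eta-kt)^2)$ near the critical time $t\sim\eta/k$. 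The hypothesis $|\beta|>1/2$ is used precisely to ensure the Alfv\'en coupling dominates the stretching term $2\de_{xy}\phi$ and the diffusion mismatch, producing the symmetric structure in the linearized problem.

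The key estimate is a weighted energy bound of the form
\begin{equation}
\ddt\Big(\mathcal{E}(t)\Big)+\delta_0\nu^{\frac13}\mathcal{E}(t)+\nu\|\nabla_L Z_\pm\|_{H^N}^2\lesssim \jap{t}^{\alpha}\cdot\mathrm{NL\,terms},
\end{equation}
where $\mathcal{E}$ is an $H^N$--type norm of $Z_\pm$ modulated by a Fourier multiplier that encodes the $\nu^{-\frac13}$ time--scale for enhanced dissipation (of the kind used in the Couette threshold literature, e.g.\ weights of the form $\exp(\delta_0\nu^{\frac13}\arctan(\nu^{\frac13}(t-\eta/k)))$ or an equivalent ghost multiplier). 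The commutator between this multiplier and $\de_t$ produces a favorable $\nu^{\frac13}$ dissipation, while its commutator with the Alfv\'en transport is $O(\mu)$, which under the hypothesis $\nu\geq(16\mu/\beta^2)^3$ (equivalently $\mu\lesssim\beta^2\nu^{\frac13}$) is absorbed by the $\delta_0\nu^{\frac13}$ gain. The factor $\jap{t}$ in \eqref{bd:omj} comes from the inviscid stretching already visible at the linear level, and \eqref{bd:invdamp} follows from \eqref{bd:omj} via the elliptic regularity gain $|k|/|\nabla_L|\lesssim\jap{t}^{-1}$ for the $u^2,b^2$ components and elementary bounds for $u^1,b^1$.

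The main obstacle is closing the bootstrap on the nonlinear terms within the threshold $\eps\ll\nu^{\frac23}$. I would set up a bootstrap assumption consisting of \eqref{bd:omj}--\eqref{bd:u0b0} with a doubled constant, then estimate $\mathrm{NL}_\omega,\mathrm{NL}_j$ by product/paraproduct estimates in $H^N$, losing one factor of $\jap{t}$ from $\Omega,J$ and one factor of $\nu^{-\frac13}$ from time integration of $\exp(-\delta_0\nu^{\frac13}t)$. This gives a nonlinear feedback of size $\eps^2\nu^{-\frac43}\jap{t}^{\beta}$, which beats $\eps$ iff $\eps\ll\nu^{\frac23}$; checking that the $\jap{t}^{\beta}$ factor is also controlled by the enhanced dissipation and that the transport--nonlinearity losses (the most dangerous piece is $b\cdot\nabla\omega$ hitting the highest derivative of $\omega$) can be closed using the symmetric variables to convert $\de_x$ derivatives into Alfv\'en transport estimates, is the delicate part. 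The zero--mode bound \eqref{bd:u0b0} is then fed back into the nonzero--mode estimate via $\nu^{1/2}$ parabolic gains on $\de_y(u_0,b_0)$, which is exactly what is needed because $u_0,b_0$ do not enjoy enhanced dissipation.
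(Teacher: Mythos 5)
Your proposal diverges from the paper at its central point, and the divergence hides a genuine gap rather than an alternative route. You replace the paper's symmetric variables $(z,q)=(\de_{xx}(-\Delta))^{-1/2}(\omega,j)$ by Els\"asser-type unknowns $\Omega\pm J$ and claim the nonlocal term $2\de_X(\de_Y-t\de_X)\Delta_L^{-1}J$ is ``lower order'' and controllable by symbol decay near the critical time. In Fourier this term is $\frac{\de_t p}{p}\hat J$ with $|\de_t p|/p=2|k||\eta-kt|/(k^2+(\eta-kt)^2)$: it is $O(1)$ (maximal when $|\eta-kt|=|k|$), not small near $t=\eta/k$, and its time integral is $\approx\log(p(t)/p(0))$, so any perturbative/Gr\"onwall treatment reproduces the $\jap{t}^2$ growth of the uncoupled $\beta=0$ problem (Remark \ref{rem:beta0}), not the $\jap{t}$ growth in \eqref{bd:omj}. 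The Alfv\'en block $\beta\de_X$ is skew-symmetric, so in any plain weighted energy for $\Omega\pm J$ it contributes nothing and cannot ``dominate'' the stretching term; your hypothesis $|\beta|>1/2$ never actually enters your estimates. The paper's mechanism is precisely a device for this point: the energy \eqref{def:Esymlin} for $(Z,Q)=\sqrt{k^2/p}(\hat\Omega,\hat J)$ contains the mixed term $-\Re\big(\tfrac{\de_t p}{\beta ikp}mZ\,m\bar Q\big)$, coercive exactly when $|\beta|>1/2$, whose time derivative cancels the dangerous $\tfrac{\de_t p}{p}$ contributions; the assumption $\nu\ge(16\mu/\beta^2)^3$ is used there to absorb the $(\nu+\mu)$-error produced by differentiating this mixed term (there is no ``commutator of the ghost weight with the Alfv\'en transport'': both are Fourier multipliers and commute exactly). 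Your Els\"asser route is essentially the Liss/Chen--Zi strategy, which requires an integration-in-time of the Alfv\'en oscillation that you never perform, and which as implemented in the literature gives only $\nu^{5/6^+}$ with $\nu=\mu$. Your threshold arithmetic also does not close as stated: a feedback of size $\eps^2\nu^{-4/3}$ beats $\eps$ only if $\eps\ll\nu^{4/3}$, not $\nu^{2/3}$.

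The second gap is the zero-mode feedback. The transport nonlinearity sends $u^2_{\neq}\de_y\omega_0$ (and its magnetic analogues) into the $k\neq0$ equations, and estimating this in $H^N$ requires $\de_y\omega_0=-\de_{yy}u_0^1$ in $H^N$, i.e.\ two derivatives beyond anything in your bootstrap list \eqref{bd:omj}--\eqref{bd:u0b0}, which only controls $u_0$ in $H^N$ and $\nu^{1/2}\de_yu_0$ in $L^2_tH^N$; a ``basic energy estimate at the end'' cannot supply it, since the zero modes must be estimated inside the bootstrap and the dangerous direction is their feedback into the nonzero modes. The paper resolves this by carrying two additional bootstrap quantities: a top-order energy for the full $(\Omega,J)$ allowed to grow like $\jap{t}$ (closed using the smallness of $(Z,Q)$ to tame the linear $\tfrac{\de_t p}{p}\hat J$ term), and a zero-mode energy containing $\jap{t}^{-2}\norm{(\Omega_0,J_0)}_{H^N}^2$ together with its dissipation, so that $\norm{\de_Y(\Omega_0,J_0)}_{H^N}$ is available (time-integrated, at the cost of $\nu^{-1/2}\jap{t}$) when closing the $k\neq 0$ estimates. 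Without adding such quantities, or an equivalent regularity-for-time-growth trade, your bootstrap cannot close.
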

The bound in \eqref{bd:omj}, combines the linear in time transient growth with the dissipation enhancement of the vorticity and current density. The growth is  an inviscid linear mechanism generated by the background magnetic field, resulting in a transient amplification of order $\nu^{-1/3}$ in the viscous case. The exponential decay for times larger than $\nu^{-1/3}$ is a common feature of perturbations around the Couette flow \cite{liss2020sobolev,masmoudi2022stability,wei2021transition,li2022asymptotic,masmoudi2020enhanced,li2023asymptotic,zillinger2021enhanced}. This is caused by the interaction between the advection by $(y,0)$ with the diffusion: the Couette flow sends information towards high-vertical frequencies where dissipation is more efficient, leading to the accelerated decay of the non-zero horizontal frequencies. The estimates \eqref{bd:invdamp} are a direct consequence of \eqref{bd:omj}. They quantify the \textit{inviscid damping} \cite{BM15} of the second component of the velocity field, but we do not expect any decay on $u^1$ in view of the possible growth of the vorticity.  Finally, from \eqref{bd:u0b0} we deduce that the $x$-averages of the solution remain small so that the dynamics is effectively converging to a shear flow nearby the steady state \eqref{couette}.  Let us first sketch the strategy of proof and then we make a few remarks.
 \subsection*{Proof strategy:} a key point in the proof of Theorem \ref{th:main} is the use of the \textit{symmetric variables}: 
\begin{equation}
\label{def:zq}
z=(\de_{xx}(-\Delta))^{-1/2}\omega, \qquad q=(\de_{xx}(-\Delta))^{-1/2}j.
\end{equation}
These unknowns are inspired by an energy method introduced in \cite{antonelli2021linear} for compressible fluids, and further improved and refined in \cite{BCZD20,BBCZD23}. The main observation is that it is possible to ``symmetrize" the linearized system to get a new system that enjoys a better energy structure\footnote{The method allows to effectively capture some oscillations that are stabilizing the system.}, as  explained in detail in Section \ref{sec:linear}. For the linearized problem, we see that for $|\beta|>1/2$ we have a coercive energy functional for $(z,q)$, see \eqref{def:Esymlin}. Using the good properties of this energy, we prove that the results stated in Theorem \ref{th:main} are true at the linearized level, see Proposition \ref{prop:keylin}.

The idea is then to bootstrap the control of the linear energy functional to the nonlinear case, which is done in Sections \ref{sec:NL}-\ref{sec:NLproof}. There are  two main difficulties to overcome: 
\begin{enumerate}
\item  It is not straightforward to obtain bounds for the nonlinear system associated to $(z,q)$ because the inverse of $(\de_{xx}(-\Delta))^{-1/2}$ is not a uniformly bounded Fourier multiplier. This imply that we might encounter some derivatives losses when reconstructing the symmetric variables in the nonlinear terms. 
\item The symmetric variables do not provide enough information over the $x$-averages of the solution.
\end{enumerate}
To overcome these issues, we follow a strategy similar to the one used in \cite{BBCZD23}. For the first problem, we exploit the nice structure of the nonlinearity and $(\de_{xx}(-\Delta))^{-1/2}$. The main idea is that, by performing a frequency decomposition, we can exchange derivative losses with time-growth. This decomposition considers not only interactions between high-low (or low-high) frequencies but it also accounts for what occurs near Orr's critical time $t=\eta/k$, where $\eta$ and $k$ are the Fourier coefficients associated with the variables $x$ and $y$, respectively. The dissipation enhancenment plays a crucial role in avoiding the use of technically involved Fourier multipliers needed in inviscid problems, e.g. \cite{BM15,BBCZD23}. In fact, it will suffice to capture the dissipation enhancement along with a form of inviscid damping using standard (by now) Fourier multipliers that are uniformly bounded in $\nu$.

The second problem instead can be explained as follows. The transport nonlinearity will generate terms containing the $k=0$-modes, for instance 
\begin{equation}
u\cdot \nabla  \omega= u_{\neq}\cdot \nabla \omega_{\neq}+u_0^1\de_x \omega_{\neq}+u_{\neq}^2\de_y\omega_0,
\end{equation}
where $u_0=(u_0^1,0)$ thanks to the incompressibility condition. Heuristically, $u_0^1$ has roughly the same regularity as $z$ (one derivative less than vorticity), and therefore, we can hope to control it using information on $z$. On the other hand, $\partial_y\omega_0$ has higher regularity with respect to $z$, and even dissipation cannot assist us since $\partial_y\omega_0=-\partial_{yy}u_0^1$, which involves two derivatives more than $z$. This suggests the need to directly control the system for $(\omega,j)$, which has a worst energy structure. Nevertheless, through the control on $(z,q)$,  we show that a dangerous linear term can be easily controlled  leading to bounds in agreement with the linearized behavior of $(\omega_{\neq},j_{\neq})$. The energy associated to $(\omega,j)$ is at the highest level of regularity but is allowed to grow in time, meaning that the control of the nonlinearity is somewhat easier. 
It is important to note that in Theorem \ref{th:main} we have not stated the bounds for $\norm{(\omega_0,j_0)}_{L^\infty([0,t];H^N)}=\norm{(u^1_0,b^1_0)}_{L^\infty([0,t];H^{N+1})}$, which are indeed of order $\eps\jap{t}$, even though there is no growth mechanism for the $x$-averages in the linearized problem.

\begin{remark}
The standard auxiliary variables for the NS--MHD system are the Els\"asser variables \cite{bardos1988longtime,wei2017global}, corresponding to $e^{\pm}=\omega \pm j$. The system satisfied by $e^\pm$ also has a nice structure where one could exploit the integration-in-time trick used by Liss in the 3D problem \cite{liss2020sobolev}. This strategy is followed in the 2D case by Chen and Zi \cite{chen23} as well, where there are also the additional complicatons given by the more general form of the shear flow considered.  It appears that using $(z,q)$ has certain technical advantages, particularly in achieving the $\nu^{2/3}$ threshold and handling cases where $\nu\neq \mu$. We also mention that, in the result for the 2D Euler--MHD obtained by Zhao and Zi \cite{zhao2023asymptotic}, the main energy functional introduced by the authors  uses an approximated version of $(z,q)$. In particular, $(\de_{xx}(-\Delta))^{-1/2}$ is replaced by a Fourier multiplier whose inverse is bounded with a $\mu$-dependent constant (the weight $m$ in \cite{zhao2023asymptotic}).  The use of symmetric variables has proven to be a flexible approach \cite{antonelli2021linear,BCZD20,BBCZD23,zillinger2021enhanced}, which is in essence a carefully weighted Kawashima's type energy argument 
\cite{kawashima1984systems}.
\end{remark}
 \begin{remark}
 \label{rem:threshold}
 The $\nu^{2/3}$  threshold in Sobolev spaces\footnote{The use of $H^N$ with $N>10$ is certainly not optimal and it might be of interest to understand what are the critical Sobolev spaces, in a similar spirit of \cite{masmoudi2020enhanced,li2022dynamical,li2023asymptotic}}  can be heuristically justified as in \cite{zhai2023stability}. Namely, for the 2D NS case the best available threshold is $\nu^{1/3}$ \cite{masmoudi2022stability}. Here, the vorticity and current density are experiencing a growth of order $\nu^{-1/3}$ after which the dissipation enhancement kicks in. We would then require an extra $\nu^{1/3}$ smallness to keep everything in a perturbative regime even with this transient growth, which  is why we need to assume $\eps\ll \nu^{1/3+1/3}$.  
 
  On the other hand, the threshold $\nu^{5/6^+}$ obtained in \cite{chen23} can be related to the method of proof. Specifically, the control of nonlinear terms is inspired by \cite{bedrossian2018sobolev} where a $\nu^{1/2}$ threshold is obtained in the 2D NS setting. In this paper, we need to treat the nonlinear terms in a more refined way compared to \cite{bedrossian2018sobolev} to improve the threshold, relying on estimates that are closer to inviscid problems. We believe that the nice methods introduced in \cite{chen23} to handle shear flows close to Couette, could be combined with the strategy we use here to obtain the $\nu^{2/3^+}$ threshold for shear near Couette as well. 
  \end{remark}
 
 \begin{remark}
The case of different viscosity coefficients $\mu\neq \nu$ is generally more challenging to study compared  to the case $\mu=\nu$, see for instance  \cite{wei2017global}. For the energy method employed here, having $\mu\neq \nu$ does not pose any significant difficulty because we can exploit the dissipation enhancement to handle some linear errors arising from this anisotropy. This is precisely why we need to assume that $\mu^3\ll \nu \leq \mu$, and we anticipate that the problem becomes much more intricate in the opposite regime, as hinted by the limiting case $\nu=0$ investigated in \cite{zhao2023asymptotic}.  \end{remark}
 \begin{remark}
 All the constants hidden in the symbol $\lesssim$ degenerate as $|\beta|\to 1/2$, meaning that $\eps_0\to 0$ as $|\beta|\to 1/2$. This is related to the coercivity of the energy functional we use in the linearized problem, for which we need $|\beta|>1/2$. In fact, when $\beta=0$ the linearized system is not coupled anymore and the current density will have a growth of order $\jap{t}^2$ instead of $\jap{t}$, see also Remark \ref{rem:beta0}.
  \end{remark}

\subsection{Notation}
We introduce some notation used throughout the paper. For $a,b\in \mathbb{R}$, we define 
\begin{equation}
|a,b|:=|a|+|b|, \qquad \jap{a}=\sqrt{1+a^2}.
\end{equation}
We use the notation $a\lesssim b$ to indicate that there is a constant $C>0$, independent of the relevant paramenters $\nu,\mu$ such that $a\leq Cb$. Similarly, we say $a\approx b$ if $a\lesssim b$ and $b\lesssim a$.

We define the Fourier transform of a function $F$ as 
\begin{equation}
\hat{f}_{k}(\eta)=\cF(f)_k(\eta)=\frac{1}{2\pi}\iint_{\mathbb{T}\times \mathbb{R}} \e^{i(kx+\eta y)} f(x,y)\dd x \dd y,
\end{equation}
and the inverse Fourier transform as 
\begin{equation}
\cF^{-1}(\hat{f})(x,y)=\frac{1}{2\pi}\sum_{k\in \ZZ}\int_{\mathbb{R}} \e^{i(kx+\eta y)} \hat{f}_k(\eta)\dd \eta.
\end{equation}
We identify Fourier multipliers $w(\nabla)$ with their symbol $w_k(t,\eta)$, except for standard derivatives $\de_x, \de_y$ where we  use the symbols $ik, i\eta$. 
We denote the $L^2$ scalar product as 
\begin{equation}
\jap{f,g}_{L^2}=\langle{\hat{f},\hat{g}}\rangle_{L^2}=\sum_{k\in \mathbb{Z}}\int_{\mathbb{R}}\hat{f}_{k}(\eta)\bar{\hat{g}}_{k}(\eta)\dd \eta,
\end{equation}
and the norm in $H^N$ as 
\begin{equation}
\norm{f}_{H^N}^2=\sum_{k\in \mathbb{Z}}\int_{\mathbb{R}}\jap{|k,\eta|}^{2N}|\hat{f}_{k}(\eta)|^2\dd \eta.
\end{equation}
We use the following convention
\begin{equation}
\label{not:F}
\jap{\de_{xx}(f\de_{xx}g),h}_{L^2}=\jap{k^2(\hat{f}*(\ell^2\hat{g})),\hat{h}}_{L^2}=\sum_{k,\ell\in \mathbb{Z}}\iint_{\mathbb{R}^2}k^2\hat{f}_{k-\ell}(\eta-\xi)\ell^2\hat{g}_{\ell}(\xi)\bar{\hat{h}}_{k}(\eta)\dd \eta \dd \xi.
\end{equation}
We define the frequency decomposition as in \cite{BGM15III,liss2020sobolev}: let $\chi:\RR^4\to \RR$ be 
\begin{equation}
\chi(k,\eta,\ell,\xi)=\begin{cases} 1&\qquad \text{if }\, |k-\ell,\eta-\xi|\leq 2|\ell,\xi|\\
0&\qquad \text{otherwise}.
\end{cases}
\end{equation}
We use the paraproduct decomposition 
\begin{align}
\cF(fg)_k(\eta)=\,&\sum_{k,\ell \in \mathbb{Z}}\int_{\mathbb{R}}\hat{f}_{k-\ell}(\eta-\xi)\hat{g}_{\ell}(\xi)\chi(k,\eta,\ell,\xi)\dd \xi\\
&+\sum_{k,\ell \in \mathbb{Z}}\int_{\mathbb{R}}\hat{f}_{k-\ell}(\eta-\xi)\hat{g}_{\ell}(\xi)(1-\chi(k,\eta,\ell,\xi))\dd \xi\\
\label{def:LH}:=\,&\cF(f^{Lo}g^{Hi})+\cF(f^{Hi}g^{Lo}).
\end{align}
Notice that $|k,\eta|\leq 3|\ell,\xi|$ on the support of $\chi$ and $|k,\eta|\leq 3|k-\ell,\eta-\xi|/2$ on the support of $1-\chi$.

\section{Linearized problem}
\label{sec:linear}
In this section, we study in detail the simple linearized dynamics. First of all, we introduce the change of coordinates 
\begin{equation}
X=x-yt, \qquad Y=y.
\end{equation}
We denote the variables in the \textit{moving frame} with capital letters 
\begin{equation}
\label{def:OJ}
\Omega(t,X,Y)=\omega(t,x,y), \qquad J(t,X,Y)=j(t,x,y),
\end{equation}
and 
\begin{equation}
\label{def:DeltaL}
\nabla_L=(\de_X,\de_Y-t\de_X), \qquad \Delta_L=\de_{XX}+(\de_Y-t\de_X)^2.
\end{equation}
The linearized problem in the moving frame is
\begin{align}
&\de_t\Omega=\nu\Delta_L\Omega+\beta\de_XJ,\\
&\de_tJ=\mu\Delta_L J-2\de_X(\de_Y-t\de_X)\Delta^{-1}_LJ +\beta\de_X\Omega.
\end{align}
Taking the Fourier transform in both variables, defining the symbol associated to $-\Delta_L$ as 
\begin{equation}
\label{def:p}
p_k(t,\eta):=k^2+(\eta-kt)^2,
\end{equation}
we get
\begin{align}
\label{eq:linOm}
&\de_t\hat{\Omega}=-\nu p\hat{\Omega}+\beta ik\hat{J},\\
\label{eq:linJ}&\de_t\hat{J}=-\mu p\hat{J}+\frac{\de_t p}{p}\hat{J} +\beta ik\hat{\Omega},
\end{align}
where we omit the subscript $k$ to ease the notation. Multiplying the equation for $\hat{J}$ by $p^{-1}$, notice that
\begin{align}
\label{eq:hO}
&\de_t\hat{\Omega}=-\nu p\hat{\Omega}+\beta ik p(p^{-1}\hat{J}),\\
\label{eq:hJ}&\de_t(p^{-1}\hat{J})=-\mu p(p^{-1}\hat{J})+\frac{\beta ik}{p}\hat{\Omega}.
\end{align}
We briefly comment below on the inviscid case. Then we study the viscous problem with a flexible energy method that will be useful in the nonlinear analysis.

\medskip
\noindent $\bullet$ \textit{Case $\nu=\mu=0$:}
in absence of viscosity, we see that the $2\times 2$ non-autonomous system \eqref{eq:hO}-\eqref{eq:hJ} has almost an antisymmetric structure, but the time-dependence of the factor $p$  prevents the existence of an exact conserved quantity. To overcome this problem, we apply the \textit{symmetrization scheme} introduced in \cite{antonelli2021linear} and further developed in \cite{BCZD20,BBCZD23}. This amounts at finding two good unknowns for which we have an \textit{almost conserved} quantity. In this case, the symmetrization procedure suggests the use of the variables 
\begin{align}
\label{def:ZQ}
\begin{cases}\displaystyle Z_k(t,\eta)=\sqrt{\frac{k^2}{p_k(t,\eta)}}\hat{\Omega}_k(t,\eta), \qquad Q_k(t,\eta)=\sqrt{\frac{k^2}{p_k(t,\eta)}}\hat{J}_k(t,\eta), \qquad \text{for }k\neq 0,\\
Z_0(t,\eta)=Q_0(t,\eta)=0.
\end{cases}
\end{align}
In the original reference frame, these variables are exactly the $(z,q)=(\de_{xx}(-\Delta))^{-\frac12}(\omega,j)$ discussed in the Introduction \eqref{def:zq}. The system satisfied by $(Z,Q)$ is
\begin{align}
\frac{\dd}{\dd t}\begin{pmatrix}{Z}\\ {Q}\end{pmatrix}=\begin{pmatrix}-\frac12\frac{\de_t p}{p}& \beta ik \\
\beta ik & \frac12\frac{\de_t p}{p} \end{pmatrix}\begin{pmatrix}{Z}\\ {Q}\end{pmatrix}.
\end{align}
The energy functional is then given by
\begin{equation}
\tilde{E}_{\sfsym}(t):=\frac12\left(|{Z}|^2+|{Q}|^2-\frac{\de_tp}{\beta ikp}\Re({Z}\bar{{Q}})\right)(t).
\end{equation}
\begin{remark}
If $\de_tp$ and $p$ were constants, one would have that $\tilde{E}_{\sfsym}(t)$ is a conserved quantity, meaning that the dynamics lie in an ellipse in the $Z$--$Q$ plane. With time-dependent coefficients we only aim at showing that the dynamics remains in an annular region in the $Z$--$Q$ plane.
\end{remark} 
Having that
\begin{equation}
\frac{|\de_t p_k(t,\eta)|}{p_{k}(t,\eta)}=\frac{2|k(\eta-kt)|}{k^2+(\eta-kt^2)}\leq 1,
\end{equation}
we see that energy functional is coercive only when $|\beta|>1/2$. In particular 
\begin{equation}
\frac12\left(1-\frac{1}{2\beta}\right)\left(|Z|^2+|Q|^2\right)\leq \tilde{E}_{\sfsym}\leq \frac12\left(1+\frac{1}{2\beta}\right)\left(|Z|^2+|Q|^2\right)
\end{equation}
In fact, the coercivity of the energy functional is the only reason why we need to assume $|\beta|>1/2$. 

Taking the time derivative of $\tilde{E}_{\sfsym}$ and using a Gr\"onwall type estimate, it is not difficult to show that
\begin{equation}
\tilde{E}_{\sfsym}(0)\approx_{\beta-\frac12} \tilde{E}_{\sfsym}(t),
\end{equation}
meaning that all the constants degenerate when $|\beta| \to 1/2$.
\begin{remark}
\label{rem:beta0}
It might be natural that $|\beta|=1/2$ is a somewhat sharp threshold to observe the linear-in-time growth. For instance, when $\beta=0$ one can explicitly solve the system and obtain that $\Omega_\neq$ is conserved in time whereas $J_{\neq}\approx \jap{t}^2$. It seems reasonable that for $0<|\beta|<1/2$ one simply interpolates between the behavior at $\beta=0$ and $|\beta|>1/2$, in a similar fashion to what happens in the Boussinesq case at small Richardson's number \cite{YL18}.
\end{remark}
\medskip
\noindent $\bullet$ \textit{Case $0<\nu\leq \mu$:}
When viscosity is present, we aim at capturing the dissipation enhancement, that is the exponential decay on a time-scale of order $O(\nu^{-\frac13})$. This could be proved by using the energy functional $\tilde{E}_{\sfsym}$ and some algebraic manipulation. However, with the idea in mind of addressing the nonlinear problem, we prove the enhanced dissipation estimate with the help of some Fourier multipliers, which are by now standard in the literature.
We use the following weights: the first one, introduced in \cite{Zillinger17}, is to control error terms which are integrable in time and is given by 
\begin{align}
\label{def:md}
\begin{cases}\displaystyle\de_tm^{d}_k=\frac{C_\beta}{1+(\eta/k-t)^2}m^{d}_k, \qquad \text{for } k\neq 0,\\
m^d_k(0,\eta)=1\\
m^d_0(t,\eta)=1,
\end{cases}
\end{align}
where $C_\beta>0$ is a fixed constant that can be chosen to be $C_{\beta}=\max\{1,4/|\beta|\}$ for example.
This weight is needed to recover some time-decay from the inviscid damping, that is generated by inverse powers of the Laplacian in the moving frame. Notice that 
\begin{equation}
\label{bd:md}
m^d_k(t,\eta)\approx 1 \qquad \text{ for all } t>0, \, \eta \in \mathbb{R}, \, k\neq 0.
\end{equation}
The next weight, introduced in \cite{BGM15I}, is needed to capture the dissipation enhancement and is defined as 
\begin{align}
\label{def:mnu}
\begin{cases}\displaystyle\de_tm^{\nu}_k=\frac{\nu^{\frac13}}{1+\nu^\frac23(\eta/k-t)^2}m^{\nu}_k, \qquad \text{for } k\neq 0, \, \nu>0,\\
m^\nu_k(0,\eta)=1\\
m^0_k(t,\eta)=m^\nu_0(t,\eta)=1.
\end{cases}
\end{align}
Also in this case we have 
\begin{equation}
\label{bd:mnu}
m^\nu_k(t,\eta)\approx 1 \qquad \text{ for all } t>0, \, \eta \in \mathbb{R}, \, k\neq 0.
\end{equation}
The key property of the weight $m^\nu$  is 
\begin{equation}
\label{eq:keymnu}
\nu p_k(t,\eta)+\frac{\de_t m^{\nu}_k(t,\eta)}{m^{\nu}_k(t,\eta)}\geq \frac14 \nu^\frac13, \qquad \text{ for all } t>0, \, \eta \in \mathbb{R}, \, k\neq 0,
\end{equation}
which can be easily checked by considering $|\eta/k-t|\leq \nu^{-\frac13}$ or $|\eta/k-t|\geq \nu^{-\frac13}$ separately.
This weight is compensating the inefficiency of the dissipation enhancement close to the critical time $t= \eta/k$. 

Finally, we need a last weight to absorb some error terms given by the mixed scalar product in the energy functional,  
\begin{align}
\label{def:ms}
\begin{cases}\displaystyle\de_tm^{s}_k=\frac{\gamma_\beta C_{\beta}}{(1+(\eta/k-t)^2)^{\frac32}}m^{s}_k, \qquad \text{for } k\neq 0, \, \nu>0,\\
m^\nu_k(0,\eta)=1\\
m^0_k(t,\eta)=m^\nu_0(t,\eta)=1,
\end{cases}
\end{align}
where $\gamma_\beta$ is a fixed constant such that 
\begin{equation}
\frac{1}{|\beta|}\left(\frac12+\frac{1}{\gamma_\beta}\right)<1.
\end{equation}
Notice that $\gamma_\beta \to +\infty$ as $|\beta|\to 1/2$. This weight is again bounded above and below, namely  
\begin{equation}
\label{bd:ms}
m^s_k(t,\eta)\approx 1 \qquad \text{ for all } t>0, \, \eta \in \mathbb{R}, \, k\neq 0,
\end{equation}
and satisfies 
\begin{equation}
\label{bd:ms}
\frac{\de_tm^s}{m^s}=\gamma_\beta\sqrt{\frac{k^2}{p}}\frac{\de_t m^d}{m^d}.
\end{equation}

Aiming at obtaining a bound in Sobolev spaces, we then define the weight 
\begin{equation}
\label{def:m}
m_k(t,\eta)=\begin{cases}\displaystyle\e^{\delta_0\nu^\frac13 t}\jap{|k,\eta|}^N(m^dm^\nu m^s)^{-1}_k(t,\eta), &\qquad \text{for } k\neq 0,\\
\jap{\eta}^N &\qquad \text{for }k= 0,
\end{cases}
\end{equation}
where $0<\delta_0<1/64$ is a sufficiently small constant chosen later in the proof.
When $k\neq 0$ we have
\begin{equation}
\label{eq:dtmlin}
\frac{\de_t m_k}{m_k}=\delta_0\nu^\frac13-\sum_{\iota \in \{\nu,d,s\}}\frac{\de_t m^\iota_k}{m^\iota_k}
\end{equation}
 The good unkowns are still given by \eqref{def:ZQ}. The system for the weighted variables $(mZ,mQ)$ read as
\begin{align}
&\de_t(mZ)=-\left(\nu p-\frac{\de_t m}{m}\right)mZ-\frac12\frac{\de_t p}{p}mZ+\beta ik mQ,\\
&\de_t(mQ)=- \left(\mu p-\frac{\de_t m}{m}\right)mQ+\frac12\frac{\de_t p}{p}mQ+\beta i kmZ.
\end{align}
The energy functional associated to the system is
\begin{equation}
\label{def:Esymlin}
E_{\sfsym}(t):=\frac12\left(|mZ|^2+|mQ|^2-\Re\left(\frac{\de_tp}{\beta ikp}(mZm\bar{Q})\right)\right)(t).
\end{equation}
We have the following.
\begin{proposition}
\label{prop:keylin}
Let $0<\nu\leq \mu\ll 1$, $|\beta|>1/2$ and assume that assume that $\nu^\frac13\geq (16\mu/\beta^2)$. Then
 \begin{equation}
 \label{bd:Elinsym}
E_{\sfsym}(t)+\frac{1}{16}\int_0^tD_{\sfsym}(\tau)\dd \tau\leq E_{\sfsym}(0),
\end{equation}
where 
\begin{align}
\label{def:Dsymlin}
D_{\sfsym}(t):= \left(\nu p|mZ|^2+\mu p|mQ|^2+\left(\frac{\de_t m^\nu}{m^\nu}+\frac{\de_t m^d}{m^d}\right)(|mZ|^2+|mQ|^2)\right)(t).
\end{align}

As a consequence of this bound, the following inequalities holds true:
\begin{align}
\label{bd:ZQlin}&\norm{Z(t)}_{H^N}+\norm{Q(t)}_{H^N}\lesssim_{\beta-\frac12} \e^{-\delta_0\nu^\frac13 t}\left(\norm{Z^{in}}_{H^N}+\norm{Q^{in}}_{H^N}\right),\\
\label{bd:OJlin}&\norm{\Omega_{\neq}(t)}_{H^N}+\norm{J_{\neq}(t)}_{H^N}\lesssim_{\beta-\frac12} \jap{t}\e^{-\delta_0\nu^\frac13 t}\left(\norm{\omega_{\neq}^{in}}_{H^{N}}+\norm{j_{\neq}^{in}}_{H^{N}}\right),\\
\label{bd:inda}&\norm{U^1_{\neq}(t)}_{H^N}+\jap{t}\norm{U^2_{\neq}(t)}_{H^{N-1}}\lesssim_{\beta-\frac12} \e^{-\delta_0\nu^\frac13 t}\left(\norm{\omega_{\neq}^{in}}_{H^{N}}+\norm{j_{\neq}^{in}}_{H^{N}}\right).
\end{align}
\end{proposition}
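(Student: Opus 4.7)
The plan is to derive the differential inequality $\ddt E_{\sfsym}(t) + \tfrac{1}{16}D_{\sfsym}(t) \leq 0$ and then integrate in time. Differentiating $E_{\sfsym}$ along the weighted system, the off-diagonal $\beta ik$ contributions to $\ddt\tfrac{1}{2}(|mZ|^2+|mQ|^2)$ cancel because $\beta ik$ is purely imaginary while $mQ\overline{mZ}+mZ\overline{mQ}$ is real; what remains is the natural dissipation together with an asymmetric piece $-\tfrac{1}{2}(\de_tp/p)(|mZ|^2-|mQ|^2)$. The key point is that differentiating the cross piece in $E_{\sfsym}$ produces exactly the opposite asymmetric piece: inside $\de_t(mZ\overline{mQ})$ the diagonal $\tfrac{1}{2}\de_tp/p$ terms from the two equations cancel one another, while the off-diagonal $\beta ik$ combines with the prefactor $\de_tp/(\beta ikp)$ via $\beta ik \cdot \de_tp/(\beta ikp)=\de_tp/p$ to yield $(\de_tp/p)(|mZ|^2-|mQ|^2)$. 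The two asymmetric pieces therefore cancel, which is the heart of the symmetrization method.

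After this cancellation, only two error terms survive: an anisotropy error
\[
\mathrm{(A)} = \tfrac{1}{2}\bigl((\nu+\mu)p-2\de_tm/m\bigr)\Re\bigl(\tfrac{\de_tp}{\beta ikp}\,mZ\overline{mQ}\bigr)
\]
and a symbol-derivative error
\[
\mathrm{(B)} = -\tfrac{1}{2}\Re\bigl(\de_t\bigl(\tfrac{\de_tp}{\beta ikp}\bigr)\,mZ\overline{mQ}\bigr).
\]
The universal pointwise bound $|\de_tp|/p\leq 1$ yields $|\de_tp/(\beta ikp)|\leq 1/(2|\beta||k|)\leq 1/(2|\beta|)$, so that for $|\beta|>1/2$ the functional $E_{\sfsym}$ is coercive, $E_{\sfsym}\approx \tfrac{1}{2}(|mZ|^2+|mQ|^2)$, with constants degenerating as $|\beta|\to 1/2$. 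A short direct computation gives $|\de_t(\de_tp/(\beta ikp))|\lesssim|k|/(|\beta|p)$, which matches the size of $(\de_tm^d/m^d)/(C_\beta|\beta|)$; hence $\mathrm{(B)}$ is absorbed into the $\de_tm^d/m^d\,(|mZ|^2+|mQ|^2)$ portion of $D_{\sfsym}$ provided $C_\beta$ is chosen sufficiently large, as arranged in \eqref{def:md}.

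The delicate piece is controlling $\mathrm{(A)}$. Writing $|\de_tp/(\beta ikp)|=|\eta-kt|/(|\beta|p)$ and applying Cauchy--Schwarz to the $(\nu+\mu)p$ part gives $(\nu+\mu)p|\de_tp/(\beta ikp)||mZ||mQ|\leq \tfrac{\mu}{|\beta|}|mZ|^2+\tfrac{\mu}{|\beta|}p|mQ|^2$, using $\nu\leq\mu$. The $p|mQ|^2$ summand is absorbed by $\mu p|mQ|^2$ in $D_{\sfsym}$ (with constants depending on $|\beta|-1/2$), and the $(\mu/|\beta|)|mZ|^2$ summand is absorbed by the enhanced dissipation $\de_tm^\nu/m^\nu\cdot|mZ|^2+\nu p|mZ|^2\gtrsim \nu^{1/3}|mZ|^2$ precisely because the hypothesis $\nu^{1/3}\geq 16\mu/\beta^2$ forces $\mu/|\beta|$ to be small compared to $\nu^{1/3}$. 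The $2|\de_tm/m|$ piece of $\mathrm{(A)}$ is handled by the tailor-made identity \eqref{bd:ms}: $\de_tm^s/m^s=\gamma_\beta\sqrt{k^2/p}\,\de_tm^d/m^d$ has the right size to absorb $|\de_tp/(\beta ikp)|\cdot \de_tm^d/m^d$, and the choice of $\gamma_\beta$ from $\tfrac{1}{|\beta|}\bigl(\tfrac{1}{2}+\tfrac{1}{\gamma_\beta}\bigr)<1$ ensures absorption by $\de_tm^s/m^s\,(|mZ|^2+|mQ|^2)$.

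Integrating yields \eqref{bd:Elinsym}. The bound \eqref{bd:ZQlin} follows from coercivity of $E_{\sfsym}$ together with the factors $\e^{\delta_0\nu^{1/3}t}$ and $\jap{|k,\eta|}^N$ embedded in $m$, using $m^d m^\nu m^s\approx 1$. For \eqref{bd:OJlin} I use $\hat\Omega_k=\sqrt{p/k^2}\,Z_k$ combined with the pointwise bound $\sqrt{p/k^2}\leq \jap{\eta/k}\jap{t}$. Finally, \eqref{bd:inda} follows from Biot--Savart in the moving frame, $\hat U^1_k=i(\eta-kt)\hat\Omega_k/p=i(\eta-kt)Z_k/(|k|\sqrt p)$ and $\hat U^2_k=-ik\hat\Omega_k/p=-i\,\mathrm{sgn}(k)Z_k/\sqrt p$: the first symbol is bounded by one, giving the $U^1$ estimate, and a short case-split on $|\eta-kt|\gtrless |k|t/2$ shows $\jap t/\sqrt p\lesssim\jap{|k,\eta|}$, yielding the $\jap t^{-1}$ inviscid damping for $U^2$. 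The principal obstacle of the proof is the anisotropy error $\mathrm{(A)}$, which is exactly what forces the assumption $\nu^{1/3}\geq 16\mu/\beta^2$ and distinguishes the $\mu\neq\nu$ case from the symmetric case $\mu=\nu$.
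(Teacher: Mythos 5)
Your overall scheme is the paper's: the same functional $E_{\sfsym}$, the same cancellation of the asymmetric $\tfrac12(\de_tp/p)$ pieces against the derivative of the cross term, and the same three weights. However, two steps do not hold as written. The first is your treatment of the anisotropy term $\mathrm{(A)}$, which you yourself call the principal obstacle. After $|\de_tp|/|k|\le 2\sqrt p$, your symmetric Cauchy--Schwarz split produces a term of size $\tfrac{\mu}{|\beta|}\,p|mQ|^2$ (in fact $\tfrac{2\mu}{|\beta|}p|mQ|^2$ if the factor $2$ is kept), which you propose to absorb into $\mu p|mQ|^2$ ``with constants depending on $|\beta|-1/2$''. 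For $1/2<|\beta|\le 1$ this coefficient exceeds $\mu$, and no degeneration of constants can help: $\mu p|mQ|^2$ is the only member of $D_{\sfsym}$ that grows like $p$ at high frequency, so you are left with a negatively-signed unbounded quantity and cannot conclude \eqref{bd:Elinsym} with the fixed constant $1/16$ (nor any positive constant). The repair is the asymmetric Young split of the paper,
\begin{equation}
(\nu+\mu)\frac{|\de_tp|}{2|\beta||k|}\,|mZ||mQ|\;\le\;\Big(\frac{\nu}{4}p+\frac{\mu}{\beta^2}\Big)|mZ|^2+\Big(\frac{\mu}{4}p+\frac{\nu}{\beta^2}\Big)|mQ|^2,
\end{equation}
so that only a quarter of $\nu p$ (resp. $\mu p$) lands on $|mZ|^2$ (resp. $|mQ|^2$), while the large constants $\mu/\beta^2,\nu/\beta^2\le\nu^{1/3}/16$ are absorbed via \eqref{eq:keymnu}; this is precisely where $\nu^{1/3}\ge 16\mu/\beta^2$ enters. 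Relatedly, your claim that the whole $2\de_tm/m$ portion of $\mathrm{(A)}$ is handled by the identity $\tfrac{\de_tm^s}{m^s}=\gamma_\beta\sqrt{k^2/p}\,\tfrac{\de_tm^d}{m^d}$ is too coarse: the $\de_tm^\nu/m^\nu$ component is not comparable to $\de_tm^s/m^s$ (the ratio of $\de_tm^\nu/m^\nu$ to $\de_tm^d/m^d$ can be as large as $\nu^{-1/3}/C_\beta$) and needs its own split as in the paper's bound for $\cL_2$; moreover the anti-dissipative $\delta_0\nu^{1/3}(|mZ|^2+|mQ|^2)$ coming from the factor $\e^{\delta_0\nu^{1/3}t}$ inside $m$ (the paper's $\cL_0$) never appears in your budget, although absorbing it is routine by \eqref{eq:keymnu} for $\delta_0$ small.

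The second gap is in \eqref{bd:OJlin}. From $\hat\Omega_k=\sqrt{p/k^2}\,Z_k$ and $\sqrt{p_k(t,\eta)/k^2}=\jap{\eta/k-t}\le\jap{\eta/k}\jap t$ you only get $\norm{\Omega_{\neq}(t)}_{H^N}\lesssim\jap t\,\norm{Z(t)}_{H^{N+1}}$: the factor $\jap{\eta/k}$ is a genuine derivative, not a time factor (consider $|\eta|\gg |k|t$), and $\norm{Z^{in}}_{H^{N+1}}$ is not controlled by $\norm{\omega^{in}_{\neq}}_{H^N}$ since $\sqrt{k^2/p}$ gains only in the $(\eta-kt)$ direction. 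So your chain proves the estimate with $H^{N+1}$ data on the right, not the stated $H^N$, which matters for the regularity bookkeeping in the nonlinear bootstrap. The paper instead runs a direct weighted estimate on $(m\hat\Omega_{\neq},m\hat J_{\neq})$: the $\beta ik$ terms cancel, the viscous and weight contributions are signed for small $\delta_0$, and the only remaining term satisfies $\tfrac{|\de_tp|}{p}|m\hat J|^2=\tfrac{|\de_tp|}{|k|\sqrt p}|mQ|\,|m\hat J|\le 2|mQ||m\hat J|$, after which $\int_0^t|mQ|\,\dd\tau\lesssim t\sqrt{E_{\sfsym}(0)}$ pointwise in frequency gives the $\jap t$ growth at the same regularity. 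Your arguments for \eqref{bd:ZQlin} and \eqref{bd:inda} are correct and essentially identical to the paper's.
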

\begin{proof}
Compute that
\begin{align}
\label{eq:dmZ}&\frac12\frac{\dd}{\dd t}|mZ|^2=-\left(\nu p-\frac{\de_t m}{m}\right)|mZ|^2+\delta_0\nu^\frac13|mZ|^2-\frac12\frac{\de_t p}{p}|mZ|^2+\beta \Re(ik mQ m\bar{Z}),\\
\label{eq:dmQ}&\frac12\frac{\dd}{\dd t}|mQ|^2=-\left(\mu p-\frac{\de_t m}{m}\right)|mQ|^2+\delta_0\nu^\frac13|mQ|^2+\frac12\frac{\de_t p}{p}|mQ|^2+\beta \Re(ik mZ m\bar{Q}).
\end{align}
When adding these two equations the last terms on the right-hand side cancel out. For the mixed product instead, we have
\begin{align}
\label{eq:mixed}-\frac12\frac{\dd }{\dd t}\left(\frac{\de_tp}{\beta ikp}(mZm\bar{Q})\right)=\,&-\frac12\frac{\de_t p}{p}\left(|mQ|^2-|mZ|^2\right)\\
&\,+\frac{\de_t p}{2\beta ikp}\left((\nu+\mu)p-2\delta_0\nu^\frac13+2\sum_{\iota\in \{\nu,d,s\}}\frac{\de_t m^\iota}{m^\iota}\right) (mZm\bar{Q})\\
&-\frac12\left(\frac{p\de_{tt}p-(\de_t p)^2}{\beta ikp^2}\right)(mZm\bar{Q}).
\end{align}
Observe that the first term on the right-hand side of \eqref{eq:mixed} cancel out with the second to last terms in \eqref{eq:dmZ}-\eqref{eq:dmQ} when computing the time derivative of $E_{\sfsym}$. Thanks to the energy identities above, the property \eqref{eq:dtmlin} and the definition \eqref{def:Dsymlin} we arrive at the following inequality
\begin{equation}
\label{bd:Elin1}
\frac{\dd }{\dd t}E_{\sfsym}+D_{\sfsym}+\frac{\de_t m^s}{m^s}(|mZ|^2+|mQ|^2)\leq \sum_{i=0}^{5}\cL_i,
\end{equation}
where we define the linear error terms as: 
\begin{align}
&\cL_0:=\delta_0 \nu^\frac13\left(1+\frac{|\de_t p|}{|\beta||k|p} \right) (|mZ|^2+|mQ|^2),\\
&\cL_1:=(\nu+\mu)\frac{ |\de_t p|}{2|\beta||k|} |mZ||mQ|, \\ 
&\cL_2:=\frac{|\de_t p|}{|\beta||k|p}\frac{\de_tm^{\nu}}{m^{\nu}}|mZ||mQ|,\\
&\cL_3:=\left(\frac{p|\de_{tt}p|+(\de_t p)^2}{2|\beta| |k|p^2}\right)|mZ||mQ|,\\
&\cL_4:=\frac{|\de_t p|}{|\beta||k|p}\frac{\de_t m^d}{m^d}|mZ||mQ|,\\
&\cL_5:=\frac{|\de_t p|}{|\beta||k|p}\frac{\de_t m^s}{m^s}|mZ||mQ|.
\end{align}
Using \eqref{eq:keymnu}, we get 
\begin{equation}
\label{bd:L0}
\cL_0\leq 8 \delta_0D_{\sfsym},
\end{equation}
where we also used that $\mu\geq \nu$. For $\cL_1$, since $$\frac{|\de_t p|}{|k|}\leq 2\sqrt{p},$$ we have 
\begin{equation}
\cL_1\leq \left(\frac{\nu}{4}p +\frac{\mu}{\beta^2}\right)|mZ|^2+\left(\frac{\mu}{4}p +\frac{\nu}{\beta^2}\right)|mQ|^2.
\end{equation}
Now, we combine the hypothesis $\mu/\beta^2\leq \nu^\frac13/16$ and the property \eqref{eq:keymnu}
to get 
\begin{equation}
\label{bd:L1}
\cL_1\leq   \frac12 D_{\sfsym}.
\end{equation}
For $\cL_2$, combining  
\begin{equation}
\label{bd:triv1}
\frac{|\de_t p|}{|\beta||k|p}\leq \frac{2}{|\beta|\sqrt{p}}\leq \frac{2}{|\beta|C_\beta}\sqrt{\frac{\de_tm^d}{m^d}},
\end{equation}
with $\de_tm^\nu/m^\nu\leq \nu^\frac13$, we obtain
\begin{equation}
\cL_2\leq \frac{1}{64}\frac{\de_t m^\nu}{m^\nu}|mZ|^2+ \frac{64\nu^\frac13}{|\beta|^2C_\beta^2}\frac{\de_tm^d}{m^d}|mQ|^2.
\end{equation}
Since $\nu\ll1$, we have 
\begin{equation}
\label{bd:L2}
\cL_2\leq   \frac{1}{64} D_{\sfsym}.
\end{equation}
Turning our attention to $\cL_3$, observe that 
\begin{equation}
\frac{p|\de_{tt}p|+(\de_t p)^2}{2|\beta| |k|p^2}\leq \frac{|k|}{|\beta|p}\leq \frac{1}{|\beta|C_\beta}\frac{\de_tm^d}{m^d}.
\end{equation}
Hence
\begin{equation}
\label{bd:L3}
\cL_3\leq   \frac{1}{|\beta|C_\beta} D_{\sfsym}.
\end{equation}
To control $\cL_4$, combining the first bound in \eqref{bd:triv1} with the property \eqref{bd:ms} we have
\begin{align}
\label{bd:L4}
\cL_4\leq \frac{2}{|\beta|\sqrt{p}}\frac{\de_tm^d}{m^d}|mZ||mQ|\leq \frac{2}{|\beta|\gamma_\beta}\frac{\de_tm^s}{m^s}|mZ||mQ|\leq \frac{1}{|\beta|\gamma_\beta}\frac{\de_tm^s}{m^s}(|mZ|^2+|mQ|^2).
\end{align}
On the other hand, for $\cL_5$ we use $|\de_t p|/(|k\beta|p)\leq 1/|\beta|$ to get 
\begin{align}
\label{bd:L5}
\cL_5\leq  \frac{1}{2|\beta|}\frac{\de_tm^s}{m^s}(|mZ|^2+|mQ|^2).
\end{align}
Choosing $\delta_0,C_\beta,\gamma_\beta$ such that 
\begin{equation}
\delta_0<\frac{1}{64}, \qquad \frac{1}{|\beta|C_\beta} \leq \frac14, \qquad \frac{1}{|\beta|}\left(\frac12+\frac{1}{\gamma_\beta}\right)<1,
\end{equation}
which is always possible since $|\beta|>1/2$, we can combine \eqref{bd:L0}, \eqref{bd:L1}, \eqref{bd:L2}, \eqref{bd:L3}, \eqref{bd:L4} and \eqref{bd:L5} with \eqref{bd:Elin1} to get
\begin{equation}
\label{bd:Elin1}
\frac{\dd }{\dd t}E_{\sfsym}+\frac{1}{16}D_{\sfsym}+\left(1-\frac{1}{2|\beta|}-\frac{1}{|\beta|\gamma_\beta}\right)\frac{\de_t m^s}{m^s}(|mZ|^2+|mQ|^2)\leq 0,
\end{equation}
whence proving \eqref{bd:Elinsym}.

The bound \eqref{bd:ZQlin} is a straighforward consequence of the coercivity of $E_{\sfsym}$ and \eqref{bd:Elinsym}. To prove \eqref{bd:OJlin}, we can perform an energy estimate directly on \eqref{eq:linOm}--\eqref{eq:linJ} to get 
\begin{equation}
\label{bd:dtOJlin}
\frac12\frac{\dd }{\dd t}(|m\hat{\Omega}_{\neq}|^2+|m\hat{J}_{\neq}|^2)\leq  \frac{|\de_t p|}{p}|m\hat{J}_{\neq}|^2=\frac{|\de_t p|}{|k|\sqrt{p}}|mQ||m\hat{J}_{\neq}|\leq2 |mQ||m\hat{J}_{\neq}|.
\end{equation}
This inequality implies that 
\begin{align}
(|m\hat{\Omega}_{\neq}|+|m\hat{J}_{\neq}|)(t)&\lesssim (|m\hat{\Omega}_{\neq}|+|m\hat{J}_{\neq}|)(0)+\int_0^t|mQ|(\tau)\dd \tau\\
&\lesssim (|m\hat{\Omega}_{\neq}|+|m\hat{J}_{\neq}|)(0)+t \sqrt{E_\sfsym(0)}\\
&\lesssim \jap{t}(|m\hat{\Omega}_{\neq}|+|m\hat{J}_{\neq}|)(0),
\end{align}
where we used the coercivity of $E_{\sfsym}$ and the fact that $|\sqrt{k^2/p}\hat{F}|\leq |\hat{F}|$. Integrating in space and exploiting the definition of $m$ \eqref{def:m}, we deduce \eqref{bd:OJlin}.

The estimates \eqref{bd:inda}, follows by
\begin{align}
\label{bd:U1triv}
&|\hat{U}^1_{\neq}|=\frac{|\eta-kt|}{k^2+(\eta-kt)^2}|\hat\Omega_{\neq}|=\frac{|\eta/k-t|}{|k|\sqrt{p}}|Z|\leq |Z|\\
\label{bd:U2triv}&|\hat{U}^2_{\neq}|=\frac{|k|}{k^2+(\eta-kt)^2}|\hat\Omega_{\neq}|=\frac{1}{|k|\sqrt{p}}|Z|\leq \frac{\jap{|k,\eta|}}{\jap{t}}|Z|,
\end{align}
where in the last bound we used the general bound $\jap{a-b}\jap{b}\gtrsim \jap{a}$. Integrating in space we obtain the desired bound and conclude the proof.
\end{proof}

\section{Nonlinear problem}
\label{sec:NL}
For the nonlinear problem, the idea is to propagate the linearized behavior for the symmetric variables $(Z,Q)$, see \eqref{def:ZQ}, proved in Proposition \ref{prop:keylin}. As explained in the introduction, to overcome problems related to the $x$-averages (especially for $\de_y(\omega_0,j_0)$), 
 we need to directly control  also $(\Omega,J)$.  As shown in the proof of Proposition \ref{prop:keylin}, we can use the bounds on $(Z,Q)$ to handle the problematic linear error term associated to $2\de_{xy}\Delta^{-1}j$ in the equation for $j$. Indeed, 
from the linearized problem \eqref{eq:linOm}-\eqref{eq:linJ}, as in  \eqref{bd:dtOJlin} we notice that  
\begin{equation}
\label{bd:linOmJ}
\frac12\frac{\dd}{\dd t}\left(\norm{\Omega}_{L^2}^2+\norm{J}^2_{L^2}\right)\leq \left|\jap{\frac{\de_t p}{p}\hat{J},\hat{J}}\right|=\left|\jap{\frac{\de_t p}{|k|\sqrt{p}}Q,\hat{J}}\right|\leq 2\norm{Q}_{L^2}\norm{J}_{L^2},
\end{equation}
where we used $|\de_t p/(|k|\sqrt{p})|\leq 2$. If we are able to propagate smallness on $Q$, say $\norm{Q}\lesssim \eps$ for some norm, and $\norm{J}\lesssim \eps \jap{t}$,  we can treat this term as forcing term of order $\eps^2 \jap{t}$ which would lead to bounds on $(\Omega,J)$ of order $\eps \jap{t}$ when integrating in time. This behavior is consistent with the growth observed in the linearized problem.

Before introducing the main ingredients for the proof of Theorem \ref{th:main}, we first rewrite the system \eqref{def:omjsystem} in the moving frame $X=x-yt,\, Y=y$. Recalling the notation \eqref{def:OJ}-\eqref{def:DeltaL}, we get 
\begin{align}
\label{def:OJsystem}
\begin{cases}\de_t\Omega-\beta\de_XJ-\nu \Delta_L \Omega=\mathrm{NL}_\Omega,\\
\de_tJ-\beta\de_X\Omega-\nu \Delta_L J+2\de_{X}(\de_Y-t\de_X)\Phi=\mathrm{NL}_J,\\
U=\nabla^\perp_L \Psi, \qquad B=\nabla^\perp_L J\\
\Delta_L\Psi=\Omega, \qquad \Delta_L \Phi=J,
\end{cases}
\end{align}
where 
\begin{align}
\label{def:NLO}&\mathrm{NL}_\Omega=-\nabla^\perp\Psi\cdot\nabla \Omega+\nabla^\perp\Phi\cdot \nabla J,\\
\label{def:NLJ}&\mathrm{NL}_J=-\nabla^\perp\Psi\cdot\nabla J+\nabla^\perp\Phi\cdot \nabla \Omega\\
\label{def:NLJS}&\qquad \quad+(2\de_{X}(\de_Y-t\de_X)\Phi)(\Omega-2\de_{XX}\Psi)-(2\de_{X}(\de_Y-t\de_X)\Psi)(J-2\de_{XX}\Phi).
\end{align}
\begin{remark}
Observe that we used the following crucial cancellation 
\begin{equation}
\nabla^\perp_L F\cdot \nabla_L G=\nabla F\cdot \nabla G,
\end{equation}
that is true for any  function $F,G$.
\end{remark}
From Proposition \ref{prop:keylin}, it is clear that the proof of Theorem \ref{th:main} is reduced in obtaining bounds for energy functionals controlling $(Z,Q)$ and $(\Omega,J)$.
\subsection{Energy functionals and the bootstrap scheme}
To introduce the energy functionals needed to prove the main Theorem \ref{th:main}, we  recall the definitions of  the symmetric variables $(Z,Q)$ and the weight $m$   respectively given in \eqref{def:ZQ} and \eqref{def:m}. 

The first energy functional is the one used in the linearized problem to control the symmetric variables. Here we cannot do estimates at fixed frequencies $(k,\eta)$ and therefore we define 
\begin{align}
\label{def:Esym}
\sfE_{\sfsym}(t)=\frac12 \left(\norm{mZ}^2_{L^2}+\norm{mQ}^2_{L^2}-\frac{1}{\beta}\Re\jap{\frac{1}{ik}\frac{\de_t p}{p}mZ,mQ}_{L^2}\right).
\end{align}
The goal is to propagate the smallness of this energy, namely $\sfE_{\sfsym}\lesssim \eps^2$ where $\eps$ is the size of the initial data in Theorem \ref{th:main}.

Then, we need the higher order energy to control directly the vorticity and current density, that is
\begin{align}
\label{def:Eho}
\sfE_{\sfho}(t)=\frac12 \left(\norm{m\Omega}^2_{L^2}+\norm{mJ}^2_{L^2}\right)
\end{align}
From the estimate \eqref{bd:linOmJ} and \eqref{bd:OJlin}, we expect that $\sfE_{\sfho}\lesssim \jap{t}^2\eps^2$.

Finally, to control the $x$-averages (which is also the only reason why we introduce $\sfE_{\sfho}$), we define
\begin{align}
\label{def:E0}
\sfE_{0}(t):=\frac12\left(\norm{U_0^1}^2_{H^N}+\norm{B_0^1}_{H^N}^2+\frac{1}{\jap{t}^2}\left(\norm{\Omega_0}^2_{H^N}+\norm{J_0}^2_{H^N}\right)\right)
\end{align}
We aim at propagating smallness for this functional, that is $\sfE_0\lesssim \eps^2$.
\begin{remark}
Notice that we allow the higher order zero modes $(\Omega_0,J_0)$, controlled by $\sfE_{\sfho}$ and $\sfE_0$, to grow linearly in time in $H^N$.
 One might expect to achieve uniform boundedness for $(\Omega_0,J_0)$. However, since they are at the highest level of regularity, controlling them requires using bounds on $(Z,Q)$ which are at  lower regularity. Essentially, we are trading regularity for time-growth, which is a standard argument in these types of energy estimates.
The inclusion of the term $t^{-1}(\Omega_0, J_0)$ in $\sfE_0$ is for technical purposes and does not provide any substantial information beyond what we already know from $\sfE_{\sfho}$.
\end{remark}
Before computing the time-derivative of the energy functionals, we introduce some notation. We define the \textit{good terms} as
\begin{align}
\label{def:Gnud}
\sfG_{\nu}[F]:=\norm{\sqrt{\frac{\de_t m^\nu}{m^\nu}} m F}^2_{L^2}, \qquad \sfG_{d}[F]:=\norm{\sqrt{\frac{\de_ tm^d}{m^d}} mF}^2_{L^2},
\end{align} 
which naturally arise from the time-derivative of the weight $m$. Associated to each energy functional, we have the dissipation functionals defined as
\begin{align}
\label{def:Dsym}
&\sfD_{\sfsym}(t):=\nu\norm{\nabla_L mZ}^2_{L^2}+\mu\norm{\nabla_Lm Q}^2_{L^2}+\sum_{\iota\in\{\nu,d\}}\sfG_\iota[Z]+\sfG_\iota[Q],\\
\label{def:Dho}&\sfD_{\sfho}(t):=\nu\norm{\nabla_L m\Omega}^2_{L^2}+\mu\norm{\nabla_L mJ}^2_{L^2}+\sum_{\iota\in\{\nu,d\}}\sfG_\iota[\Omega]+\sfG_\iota[J],\\
\label{def:D0}&\sfD_0(t):=\left(\nu\norm{\de_YU_0^1}^2_{H^N}+\mu\norm{\de_YB_0^1}^2_{H^N}+\frac{1}{\jap{t}^2}\left(\nu\norm{\de_Y\Omega_0}^2_{H^N}+\mu\norm{\de_YJ_0}^2_{H^N}\right) \right).
\end{align}
We are now ready to compute some basic energy inequalities where we exploit the bounds obtained in the linearized problem and we introduce the nonlinear error terms.
\begin{lemma}
\label{lem:enid}
Let $0<\nu\leq \mu\ll 1$, $|\beta|>1/2$ and assume that assume that $\nu^\frac13\geq (16\mu/\beta^2)$. 
Let $\sfE_{\iota},\sfD_{\iota}$ with $\iota\in\{\sfsym,\sfho,0\}$ be the energy and dissipation functionals defined in \eqref{def:Esym}-\eqref{def:E0} and \eqref{def:Dsym}-\eqref{def:D0}. Then, 
\begin{align}
\label{bd:dtEsym}
&\frac{\dd }{\dd t}\sfE_{\sfsym}+\frac{1}{16}\sfD_{\sfsym}\leq \sfT_{\sfsym}+\sfS_{\sfsym},\\
\label{bd:dtEho}
&\frac{\dd }{\dd t}\sfE_{\sfho}+\frac{1}{16}\sfD_{\sfho}\leq 4\sqrt{\sfE_{\sfsym}}\sqrt{\sfE_{\sfho}}+\sfT_{\sfho}+\sfS_{\sfho},\\
\label{bd:dtE0}
&\frac{\dd }{\dd t}\sfE_{0}+\sfD_{0}\leq \sfR_{\neq},
\end{align}
where, with the convention introduced in \eqref{not:F}, we define the following error terms: 
the error for the symmetric variables are given by
\begin{align}
\label{def:Tsym}
\sfT_{\sfsym}:=&\left|\jap{\sqrt{\frac{k^2}{p}}m\cF\left(\nabla^\perp \Psi\cdot \nabla \Omega\right),mZ+\frac{1}{i\beta k}\frac{\de_tp}{p}mQ}\right|\\
&+\left|\jap{\sqrt{\frac{k^2}{p}}m\cF\left(\nabla^\perp \Phi\cdot \nabla J\right),mZ-\frac{1}{i\beta k}\frac{\de_tp}{p}mQ}\right|\\
&+\left|\jap{\sqrt{\frac{k^2}{p}}m\cF\left(\nabla^\perp \Psi\cdot \nabla J\right),mQ+\frac{1}{i\beta k}\frac{\de_tp}{p}mZ}\right|\\
&+\left|\jap{\sqrt{\frac{k^2}{p}}m\cF\left(\nabla^\perp \Phi\cdot \nabla \Omega\right),mQ-\frac{1}{i\beta k}\frac{\de_tp}{p}mZ}\right|,
\end{align}
and
\begin{align}
\label{def:Ssym}
\sfS_{\sfsym}:=\,&\left|\jap{\sqrt{\frac{k^2}{p}}m\bigg(\frac{\de_t p}{p}\hat{J}*\big(\hat{\Omega}-2\frac{\ell^2}{p}\hat{\Omega}\big)\bigg),mQ-\frac{1}{i\beta k}\frac{\de_t p}{p}mZ}\right|\\
&+\left|\jap{\sqrt{\frac{k^2}{p}}m\bigg(\frac{\de_t p}{p}\hat{\Omega}*\big(\hat{J}-2\frac{\ell^2}{p}\hat{J}\big)\bigg),mQ-\frac{1}{i\beta k}\frac{\de_t p}{p}mZ}\right|.
\end{align}
The errors for the higher--order terms are
\begin{align}
\label{def:Tho}
\sfT_{\sfho}:=&\left|\jap{m\cF\left(\nabla^\perp \Psi\cdot \nabla \Omega\right),m\Omega}\right|+\left|\jap{m\cF\left(\nabla^\perp \Phi\cdot \nabla J\right),m\Omega}\right|\\
&+\left|\jap{m\cF\left(\nabla^\perp \Psi\cdot \nabla J\right),mJ}\right|+\left|\jap{m\cF\left(\nabla^\perp \Phi\cdot \nabla \Omega\right),mJ}\right|.
\end{align}
and 
\begin{align}
\label{def:Sho}
\sfS_{\sfho}:=\,&\left|\jap{m\bigg(\frac{\de_t p}{p}\hat{J}*\big(\hat{\Omega}-2\frac{\ell^2}{p}\hat{\Omega}\big)\bigg),mJ}\right|+\left|\jap{m\bigg(\frac{\de_t p}{p}\hat{\Omega}*\big(\hat{J}-2\frac{\ell^2}{p}\hat{J}\big)\bigg),mJ}\right|.
\end{align}
The error term for the zero-mode functional is 
\begin{align}
\label{def:Rneq}
\mathsf{R}_{\neq}:=&\left|\jap{\jap{\de_Y}^N\left(U^2_{\neq} U^1_{\neq}\right)_0,\jap{\de_Y}^N(\de_YU_0^1)}\right|+\left|\jap{\jap{\de_Y}^N\left(B^2_{\neq} B^1_{\neq}\right)_0,\jap{\de_Y}^N(\de_YU_0^1)}\right|\\
&\left|\jap{\jap{\de_Y}^N\left(U^2_{\neq} B^1_{\neq}\right)_0,\jap{\de_Y}^N(\de_YB_0^1)}\right|+\left|\jap{\jap{\de_Y}^N\left(B^2_{\neq} U^1_{\neq}\right)_0,\jap{\de_Y}^N(\de_YB_0^1)}\right|\\
&+\frac{1}{\jap{t}^2}\bigg(\left|\jap{\jap{\de_Y}^N\left(U^2_{\neq} \Omega_{\neq}\right)_0,\jap{\de_Y}^N(\de_Y\Omega_0^1)}\right|+\left|\jap{\jap{\de_Y}^N\left(B^2_{\neq} J_{\neq}\right)_0,\jap{\de_Y}^N(\de_Y\Omega_0^1)}\right|\\
&\qquad\qquad +\left|\jap{\jap{\de_Y}^N\left(U^2_{\neq} J_{\neq}\right)_0,\jap{\de_Y}^N(\de_YJ_0^1)}\right|+\left|\jap{\jap{\de_Y}^N\left(B^2_{\neq} \Omega_{\neq}\right)_0,\jap{\de_Y}^N(\de_YJ_0^1)}\right|\\
&\qquad\qquad+\left|\jap{\jap{\eta}^N\bigg(2\de_XB^1_{\neq}\big(\Omega_{\neq}-2\de_{XX}\Delta_L^{-1}J_{\neq}\big)\bigg)_0,\jap{\eta}^N\hat{J}_0}\right|\\
&\qquad\qquad+\left|\jap{\jap{\eta}^N\bigg((2\de_XU^1_{\neq}\big(J_{\neq}-2\de_{XX}\Delta_L^{-1}J_{\neq}\big)\bigg)_0,\jap{\eta}^N\hat{J}_0}\right|\bigg)
.
\end{align}
\end{lemma}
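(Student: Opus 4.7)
The strategy is to compute $\tfrac{d}{dt}$ of each energy functional using the equations in the moving frame \eqref{def:OJsystem} together with the definitions of the symmetric variables $(Z,Q)$ from \eqref{def:ZQ} and the weight $m$ from \eqref{def:m}, then to absorb all the \emph{linear} error terms exactly as in Proposition \ref{prop:keylin} and to collect the leftover \emph{nonlinear} contributions into $\sfT$, $\sfS$, and $\sfR_{\neq}$.

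For \eqref{bd:dtEsym}, I multiply \eqref{def:OJsystem} by $\sqrt{k^2/p}$ in Fourier to obtain the equations for $(Z,Q)$, which differ from the linearized ones only by the forcing $\sqrt{k^2/p}\,\cF(\mathrm{NL}_\Omega)$ and $\sqrt{k^2/p}\,\cF(\mathrm{NL}_J)$ together with the factor $\de_tp/p$ acting on the nonlinear terms that come from the ``stretching-type'' contributions \eqref{def:NLJS}. Repeating the linear computation (the identities \eqref{eq:dmZ}--\eqref{eq:mixed} integrated in $(k,\eta)$), the quadratic linear error terms $\cL_0,\dots,\cL_5$ are controlled \emph{verbatim} by the hypothesis $\nu^{1/3}\geq 16\mu/\beta^2$, $|\beta|>1/2$ and the properties \eqref{eq:keymnu}, \eqref{bd:triv1}, \eqref{bd:ms} exactly as in \eqref{bd:L0}--\eqref{bd:L5}; this yields the coefficient $\tfrac{1}{16}$ in front of $\sfD_{\sfsym}$. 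The remaining terms are precisely the inner products of the nonlinear forcings with $mZ$, $mQ$, and with the mixed contribution $\tfrac{1}{i\beta k}\tfrac{\de_tp}{p}$ times the partner variable: separating the transport-type nonlinearities (first four lines of $\sfT_{\sfsym}$) from the lower-order terms in $\mathrm{NL}_J$ carrying a factor $\de_tp/p$ (the two lines of $\sfS_{\sfsym}$) produces \eqref{def:Tsym}--\eqref{def:Ssym}.

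For \eqref{bd:dtEho}, I work directly with the $(\Omega,J)$ system. Taking $\tfrac12\tfrac{d}{dt}(\|m\Omega\|^2_{L^2}+\|mJ\|^2_{L^2})$, the dissipation $-\nu\|\nabla_L m\Omega\|^2_{L^2}-\mu\|\nabla_L mJ\|^2_{L^2}$ together with the weight derivative $\tfrac{\de_t m}{m}$ produces $-\sfD_{\sfho}$ plus the $\delta_0\nu^{1/3}$ contributions, which are absorbed thanks to \eqref{eq:keymnu} exactly as in the proof of Proposition \ref{prop:keylin} (the Alfv\'en-type cross term $\beta\Re\jap{ikm\hat J,m\hat\Omega}$ cancels between the two equations). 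The key linear term that must be tracked is the one coming from $2\de_X(\de_Y-t\de_X)\Phi$, which in Fourier is $\tfrac{\de_tp}{p}\hat J$; pairing with $m\hat J$ and using the identity $\tfrac{\de_tp}{|k|\sqrt{p}}\leq 2$ together with $\hat J = \sqrt{p/k^2}\,Q$ gives, as in \eqref{bd:linOmJ},
\begin{align}
\left|\jap{\tfrac{\de_t p}{p}m\hat J, m\hat J}\right|\leq 2\|mQ\|_{L^2}\|mJ\|_{L^2}\leq 4\sqrt{\sfE_{\sfsym}}\sqrt{\sfE_{\sfho}},
\end{align}
which is the only place where $\sfE_{\sfsym}$ feeds into the evolution of $\sfE_{\sfho}$. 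The remaining inner products with $\mathrm{NL}_\Omega$ and $\mathrm{NL}_J$ give, again after splitting the transport part from the $\de_tp/p$-weighted part of \eqref{def:NLJS}, exactly $\sfT_{\sfho}$ and $\sfS_{\sfho}$.

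For \eqref{bd:dtE0}, I use that the zero modes of the linearized system decouple completely and satisfy pure heat equations in $Y$, so the only nontrivial contributions come from projecting $\mathrm{NL}_\Omega$ and $\mathrm{NL}_J$ onto the zero mode. Since $U_0=(U_0^1,0)$ and $B_0=(B_0^1,0)$ by incompressibility, $\Omega_0=-\de_YU_0^1$ and $J_0=-\de_YB_0^1$; rewriting $(u\cdot\nabla\omega)_0=\de_Y(u^2_{\neq}\omega_{\neq})_0$ and analogous identities, integrating by parts once in $Y$, and using $u^2_{\neq}u^1_{\neq}=-\de_X\psi_{\neq}\de_Y\psi_{\neq}+\de_X\psi_{\neq}\cdot t\de_X\psi_{\neq}$ type manipulations to turn vorticity factors into velocity factors for the lowest-order piece, reproduces the first four terms of $\sfR_{\neq}$ paired with $\jap{\de_Y}^{2N}(\de_YU_0^1,\de_YB_0^1)$. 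For the $t^{-2}$-weighted higher-order terms, I differentiate $\tfrac{1}{2\jap{t}^2}(\|\Omega_0\|^2_{H^N}+\|J_0\|^2_{H^N})$; the time derivative of $\jap{t}^{-2}$ has the right sign and is discarded, the dissipation contributes the $\jap{t}^{-2}$-weighted piece of $\sfD_0$, and the nonlinear projections give the last six terms of $\sfR_{\neq}$.

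The main obstacle is the bookkeeping for \eqref{bd:dtEsym}: since $(\de_{xx}(-\Delta))^{-1/2}$ does not commute with the nonlinearity, one must be careful that after multiplying by $\sqrt{k^2/p}$ all the structural cancellations of the linear argument (in particular the one between the $\tfrac12\tfrac{\de_tp}{p}$ contributions of $|mZ|^2$ and $|mQ|^2$ and the first line of \eqref{eq:mixed}) survive at the $L^2$-level with only the nonlinear residues listed in \eqref{def:Tsym}--\eqref{def:Ssym}. Once this matching is checked, the absorption of the $\cL_i$'s into $\tfrac{1}{16}\sfD_{\sfsym}$ follows verbatim from Proposition \ref{prop:keylin}.
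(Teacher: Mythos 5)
Your proposal is correct and follows essentially the same route as the paper: derive the $(Z,Q)$ equations with the nonlinear forcings, reuse the pointwise-in-frequency linear estimates of Proposition \ref{prop:keylin} to absorb the linear errors into $\tfrac{1}{16}\sfD_{\sfsym}$ (and analogously for $\sfE_{\sfho}$, with the $\tfrac{\de_t p}{p}\hat J$ term bounded through $mQ$ exactly as in \eqref{bd:linOmJ}), and collect the nonlinear inner products into $\sfT$, $\sfS$, $\sfR_{\neq}$. Your zero-mode computation (heat equations for the $x$-averages, $(u\cdot\nabla\omega)_0=\de_Y(u^2_{\neq}\omega_{\neq})_0$, one integration by parts in $Y$, and discarding the sign-favorable derivative of $\jap{t}^{-2}$) is the same cancellation the paper records in its general identity, so no gap remains.
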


\begin{remark}
In the transport error term we could easily introduce commutators by exploiting the divergence free condition on $U$ and $B$. However, since we are not worried of loss of derivatives thanks to the dissipation, we will see that commutators are not necessary to close the argument with the threshold $\nu^\frac23$, which is expected as explained in Remark \ref{rem:threshold}.
\end{remark}
\begin{proof}
In the proof, we omit the subscript $k$ for simplicity of notation. First of all, taking the Fourier transform of \eqref{def:OJsystem}, we compute that
\begin{align}
&\de_tZ=-\left(\nu p-\frac{\de_t m}{m}\right)Z-\frac12\frac{\de_t p}{p}Z+\beta ik Q+m\sqrt{\frac{k^2}{p}}\cF(\mathrm{NL}_{\Omega}),\\
&\de_tQ=- \left(\nu p-\frac{\de_t m}{m}\right)Q+\frac12\frac{\de_t p}{p}Q+\beta i kZ+m\sqrt{\frac{k^2}{p}}\cF(\mathrm{NL}_{J}).
\end{align}
Therefore, when computing the time-derivative of $\sfE_{\sfsym}$ we readily see that the contributions from the linear part of the equations are controlled as in 
the proof of Proposition \ref{prop:keylin} and give us the left-hand side of \eqref{bd:dtEsym}. Indeed, all the linear estimates are valid point-wise in frequency and here we are just integrating in space. The definition of the nonlinear terms follows by triangle inequality and Plancherel's theorem, whence proving \eqref{bd:dtEsym}.

To prove \eqref{bd:dtEho}, by \eqref{def:OJsystem} we get 
\begin{align}
\label{eq:dtEho}\frac{\dd }{\dd t}\sfE_{\sfho}+\sfD_{\sfho}=\,&\delta_0\nu^\frac13(\norm{m\Omega}^2+\norm{mJ}^2)\\
\label{eq:antiEho}&+\beta(\jap{\de_X mJ,m\Omega}+\jap{\de_X m\Omega,mJ})\\
\label{eq:symmEho}&+\jap{\frac{\de_t p}{p}m\hat{J},m\hat{J}}\\
&+\jap{m(\mathrm{NL}_{\Omega}),m\Omega}+\jap{m(\mathrm{NL}_{J}),mJ}.
\end{align}
Appealing to \eqref{bd:mnu}, we have
\begin{align}
\delta_0\nu^\frac13\norm{m(\Omega,J)}^2\leq 4\delta_0\sfD_{\sfho},
\end{align}
where we used $\mu\geq \nu$. Hence, for $\delta_0$ sufficiently we can absorb the term on the right-hand side of \eqref{eq:dtEho} to the left hand-side and remain with $\sfD_{\sfho}/16$ as in \eqref{bd:dtEho}. The term in \eqref{eq:antiEho} is clearly zero. For the term in \eqref{eq:symmEho}, reasoning as done in \eqref{bd:linOmJ}, we get 
\begin{equation}
\left|\jap{\frac{\de_t p}{p}m\hat{J},m\hat{J}}\right|\leq2 |\langle mQ,m\hat{J}\rangle|\leq 4\sqrt{\sfE_{\sfsym}}\sqrt{\sfE_{\sfho}}.
\end{equation}
For the nonlinear terms we only apply the triangle inequality.

It remains to compute the errors for the zero modes. We first write down the equations for the $x$-average of the velocity and magnetic fields. Since both $U$ and $B$ are divergence free, we have $U_0^2=B_0^2=0$. Hence, it is not difficult to check that the equations of $(U_0^1,B_0^1)$ are given by (see for instance \cite[eq. (2.11)]{zhao2023asymptotic})
\begin{align}
\label{eq:U01}&\de_t U_0^1-\nu \de_{YY}U_0^1=-(\nabla^\perp\Psi_\neq\cdot \nabla U_\neq^1)_0+(\nabla^\perp\Phi_\neq\cdot \nabla B^1_\neq)_0\\
\label{eq:B01}&\de_t B_0^1-\mu \de_{YY}B_0^1=-(\nabla^\perp\Psi_\neq\cdot \nabla B^1_\neq)_0+(\nabla^\perp\Phi_\neq\cdot \nabla U^1_\neq)_0,
\end{align}
where we also used the identity
\begin{equation}
(F G)_0=( F_{\neq} G_{\neq})_0.
\end{equation}
 The equations for $(\Omega_0,J_0)$ are like  \eqref{eq:U01}-\eqref{eq:B01} with the changes $(U_0^1,B_0^1)\to (\Omega_0,J_0)$, $(U_{\neq},B_{\neq})\to (\Omega_\neq,J_\neq)$ and the $x$-average of the stretching terms of $\mathrm{NL}_J$ in \eqref{def:NLJS}. To prove that $\sfR_{\neq}$ only involves some specific components of the nonlinearity, we observe the following general cancellations: for any multiplier $q$ and functions $F,G,H$, after a few integration by parts we obtain
 \begin{align}
& \jap{q(\nabla^\perp F_{\neq}\cdot \nabla G_{\neq})_0,qH_0}= -\jap{q(\de_Y F_{\neq}\de_X G_{\neq}),qH_0}+\jap{q(\de_X F_{\neq}\de_Y G_{\neq}),qH_0}\\
&\qquad = \jap{q((\de_XF_{\neq})G_{\neq}),q\de_Y H_0}.
 \end{align}
 It is then enough to recall that $\de_X(\Psi_{\neq},\Phi_{\neq})=(U^2_{\neq},B^2_{\neq})$ to obtain all the transport type terms in $\sfR_{\neq}$. For the stretching nonlinearity in \eqref{def:NLJS}, we use $(\de_Y-t\de_X)(\Psi_{\neq},\Phi_{\neq})=-(U^1_{\neq},B^1_{\neq})$ to conclude the proof of the lemma.
\end{proof}

With the energy identities at hand, we are ready to set up the bootstrap argument. First, we assume the following.
\medskip

\begin{quote}
\textbf{Bootstrap hypothesis:} Assume that there exists $T_\star\geq 1$ such that for all $1/2\leq t\leq T_\star$ the following inequalities holds true:
\begin{align}
\label{Bsym}\tag{$\mathrm{H}_{\sfsym}$} &\sfE_{\sfsym}(t)+\frac{1}{16}\int_0^t\sfD_{\sfsym}(\tau)\dd \tau\leq 10\eps^2,\\
\label{Bho}\tag{$\mathrm{H}_{\sfho}$}& \sfE_{\sfho}(t)+\frac{1}{16}\int_0^t\sfD_{\sfho}(\tau)\dd \tau\leq C_1\eps^2\jap{t}^2,\\
\label{B0}\tag{$\mathrm{H}_{0}$} &\sfE_{0}(t)+\frac{1}{16}\int_0^t\sfD_0(\tau)\dd \tau\leq 100\eps^2,
\end{align}
with $C_1=4000$.
\end{quote}

\medskip 
By a standard local well-posedness argument (which can be obtained from the bounds in Lemma \ref{lem:enid}), we know that for $\eps_0$ sufficiently small the hypothesis \eqref{Bsym}-\eqref{B0} holds true with $T_\star=1$ and all the constants on the right-hand side divided by $4$. Then, we aim at improving the bounds \eqref{Bsym}-\eqref{B0} so that, by continuity and the fact that the interval $[1/2,T_\star]$ will be open, closed and connected, we get $T_\star=+\infty$. In particular, our goal is to prove the following.
\begin{proposition}[Bootstrap improvement]
\label{prop:boot}
Under the hypothesis of Theorem \ref{th:main}, there exists $0<\eps_0=\eps_0(N,\beta)<1/2$  with the following property. If $\eps<\eps_0$ and \eqref{Bsym}-\eqref{B0} hold on $[1/2,T_\star]$, then for any $t\in[1/2,T_*]$ the estimates \eqref{Bsym}-\eqref{B0} are true with all the constants on the right-hand side of \eqref{Bsym}-\eqref{B0} divided by a factor $2$.
\end{proposition}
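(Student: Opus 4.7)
The plan is to integrate each of the three differential inequalities \eqref{bd:dtEsym}--\eqref{bd:dtE0} provided by Lemma \ref{lem:enid} in time and show that every nonlinear error term produces a contribution of size at most $O(\eps^3)$ (possibly decorated by a factor $\jap{t}^2$ for the higher-order functional). Combined with the initialization at $t=1/2$, where the bootstrap hypothesis holds with all right-hand-side constants divided by $4$ by standard local well-posedness, choosing $\eps_0$ small enough will improve each of \eqref{Bsym}--\eqref{B0} by a factor $2$ and close the continuity argument. Note that the linear part of \eqref{bd:dtEsym} is already handled by Proposition \ref{prop:keylin} with a full $\tfrac{1}{16}\sfD_{\sfsym}$ of dissipation left free, so only the nonlinear terms $\sfT_{\sfsym}$, $\sfS_{\sfsym}$, $\sfT_{\sfho}$, $\sfS_{\sfho}$ and $\sfR_{\neq}$ need be estimated.

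For \eqref{bd:dtEsym} the obstruction is that the multiplier $\sqrt{k^2/p}$ is not bounded below, so trading $\Omega\leftrightarrow Z$ and $J\leftrightarrow Q$ inside a nonlinearity costs a derivative. I would apply the paraproduct splitting \eqref{def:LH} to each bilinear input. In the low-high regime, the low-frequency velocity or magnetic factor is placed in $L^\infty$ using the $Z$-type inviscid-damping pointwise bounds \eqref{bd:U1triv}-\eqref{bd:U2triv} together with Sobolev embedding (giving a factor $\lesssim\eps$ from \eqref{Bsym}), while the high-frequency factor is rewritten on the output support via $\jap{k,\eta}\sim\jap{\ell,\xi}$ and $\sqrt{k^2/p}\le \jap{k,\eta}/\sqrt{p}$; the remainder is then absorbed by $\nu\norm{\nabla_L mZ}_{L^2}^2$ and the enhanced-dissipation weight inside $\sfD_{\sfsym}$ after Cauchy-Schwarz. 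The high-low regime is symmetric. The stretching error $\sfS_{\sfsym}$ is strictly easier because the input factor $|\de_tp/p|\le 2|k|/\sqrt{p}$ already supplies the missing derivative.

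For \eqref{bd:dtEho} the dominant contribution is the linear coupling $4\sqrt{\sfE_{\sfsym}\sfE_{\sfho}}$, which under the bootstrap is at most $4\sqrt{10\,C_1}\eps^2\jap{t}=800\,\eps^2\jap{t}$, integrating to $\le 400\,\eps^2\jap{t}^2$; this sits comfortably below the target $C_1\eps^2\jap{t}^2/2=2000\,\eps^2\jap{t}^2$, leaving ample margin for the cubic errors $\sfT_{\sfho}$, $\sfS_{\sfho}$, which are handled paraproduct by paraproduct exactly as in the symmetric case, now without any $\sqrt{k^2/p}$ mismatch since the output weight is simply $m\Omega$ or $mJ$. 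For \eqref{bd:dtE0}, each summand in $\sfR_{\neq}$ is quadratic in non-zero modes against a $\de_Y$-derivative of $(U_0^1,B_0^1)$ or of $(\Omega_0,J_0)/\jap{t}$, all of which lie inside $\sqrt{\sfD_0}$. Placing the inviscid-damped factor $U^2_{\neq}$ or $B^2_{\neq}$ in the relevant $L^\infty_YL^2_X$-type norm via \eqref{bd:inda} (propagated nonlinearly by \eqref{Bsym}) and its partner in $L^2$ produces an integrable $\eps^3\jap{t}^{-1}\e^{-\delta_0\nu^{1/3}t}$ bound; the stretching pieces involving $\Omega_{\neq}$ or $J_{\neq}$ are safe precisely because the $1/\jap{t}^2$ weight in $\sfE_0$ absorbs the potential $\jap{t}$ growth carried by $\sqrt{\sfE_{\sfho}}$.

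The hard part is the symmetric-variable high-low estimate near the critical time $t\approx \eta/k$, where $\sqrt{k^2/p}\sim 1$, the inviscid-damping weight $m^d$ saturates, and the entire burden of absorbing the derivative loss falls on the enhanced-dissipation weight $m^\nu$ together with $\nu\norm{\nabla_L mZ}_{L^2}^2$. The time-integrated growth $\int_0^\infty \jap{\tau}\e^{-\delta_0\nu^{1/3}\tau}d\tau\sim \nu^{-2/3}$, when paired with one factor of $\eps\jap{t}$ coming from $\sqrt{\sfE_{\sfho}}$ and an extra $\eps$ from the low-frequency factor, demands precisely $\eps\ll\nu^{2/3}$ in order for the cumulative error to remain subdominant. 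Verifying that the frequency decomposition of $\sfT_{\sfsym}$ and $\sfS_{\sfsym}$ (inspired by the inviscid toolkit of \cite{BBCZD23} but simplified by the availability of dissipation) really saturates at this threshold, and does not require more elaborate Fourier multipliers, is the essential technical content of Proposition \ref{prop:boot}.
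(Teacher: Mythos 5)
Your overall scheme coincides with the paper's: integrate the three inequalities of Lemma \ref{lem:enid}, estimate the cubic errors by a paraproduct decomposition combined with a resonant/critical-time analysis (Lemma \ref{lem:commp}), use the lossy elliptic estimate of Lemma \ref{lem:elliptic} for inviscid damping and the weights $m^d,m^\nu$ for the good terms, and the explicit $C_1$ bookkeeping for the linear coupling in \eqref{Bho} is carried out in essentially the same way (your margin $800\eps^2\jap{t}^2$ versus the paper's $1601\eps^2\jap{t}^2$, both below $C_1\eps^2\jap{t}^2/2$). The mechanism you identify for the $\nu^{2/3}$ threshold --- a $\jap{t}$ resonant loss paired with $\sqrt{\sfE_{\sfho}}\lesssim\eps\jap{t}$ against the enhanced-dissipation time scale $\nu^{-1/3}$ --- is exactly what drives the paper's estimate of $\cT_{\sfsym}(\Omega_\neq,\Omega_\neq,mZ)$.

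There is, however, one class of terms for which your recipe as stated does not close, and it is precisely the difficulty the paper singles out: the transport (and stretching) contributions in which the zero mode enters at highest regularity, i.e.\ $\cT_{\sfsym}(F_\neq,G_0,H)$ and its higher-order analogue, coming from $u^2_\neq\,\de_y\omega_0$, $b^2_\neq\,\de_y j_0$, etc. When $\de_YG_0=\de_Y\Omega_0$ (or $\de_YJ_0$) is the high-frequency factor, it cannot be ``rewritten on the output support'' and absorbed by $\nu\norm{\nabla_L mZ}^2_{L^2}$ or the enhanced-dissipation weight inside $\sfD_{\sfsym}$: it is a zero mode, unrelated to $Z$, and it carries one derivative more than anything controlled by $\sfE_0$ or $\sfE_{\sfsym}$ (this is the derivative-loss issue emphasized in the introduction). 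The paper closes these terms by pairing the full $\jap{t}^{-2}$ decay of $\de_X\Delta_L^{-1}F_\neq$ from Lemma \ref{lem:elliptic} with the bound $\norm{\de_Y(\Omega_0,J_0)}_{H^N}\lesssim\jap{t}\,\nu^{-1/2}\sqrt{\sfD_0}$, which is available only because of the $\jap{t}^{-2}$-weighted zero-mode component built into \eqref{def:E0}--\eqref{def:D0}; see \eqref{bd:Tsimneq0} and \eqref{bd:Thoneq0} and the splitting of the $N$ derivatives between $k$ and $\eta$ used there. In your proposal the $\jap{t}^{-2}$ weight is invoked only inside the $\sfE_0$ equation itself, so you should add this ingredient (and route the corresponding error through $\sfD_0$, not $\sfD_{\sfsym}$) to make the symmetric and higher-order estimates complete. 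A minor further point: in the resonant region it is the $m^d$ good term $\sfG_d$ together with the lossy elliptic estimate, rather than $m^\nu$ and the viscous term alone, that absorbs the critical-time loss; $m^\nu$ and the dissipation only convert the residual powers of $\jap{t}$ into powers of $\nu^{-1/3}$.
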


From this proposition, which we prove in the next section, the proof of Theorem \ref{th:main} readily follows by the definition of the energies and the bounds \eqref{bd:U1triv}-\eqref{bd:U2triv}. 
\section{Proof of the bootstrap proposition}
\label{sec:NLproof}
This section is dedicated to the proof Proposition \ref{prop:boot}, which implies Theorem \ref{th:main}, and constitutes the  core of this paper. To improve the bounds \eqref{Bsym}-\eqref{B0}, we  need to introduce some  useful technical results.
\subsection{Toolbox}
We introduce the \textit{resonant intervals} as follows: we say that 
\begin{equation}
\label{def:resonant}
t\in I_{k,\eta} \qquad \text{ if } \qquad \left|t-\frac{\eta}{k}\right|\leq \frac{|\eta|}{2k^2}.
\end{equation}
These intervals are usually defined in a slightly more precise way in inviscid problems, e.g. \cite{BM15}. This definition is sufficient for us since we never have to define weights using the resonant intervals. In fact, we only need them for notational purposes when splitting integrals.

We recall the following properties of the weight $p_k(t,\eta)$.
\begin{lemma}
\label{lem:commp}
For any $t,k,\eta,\ell,\xi$, the following inequalities holds true 
\begin{align}
\label{bd:pp}
\sqrt{\frac{p_\ell(\xi)}{p_k(\eta)}}\leq \jap{|k-\ell,\eta-\xi|}^3\begin{cases}\displaystyle \frac{|\eta|}{k^2(1+|\frac{\eta}{k}-t|)}, &\qquad \text{if  } \, t\in I_{k,\eta}\cap I^c_{\ell,\xi}\\
1 &\qquad \text{otherwise}
\end{cases}
\end{align}
When $k=\ell$ we have the improved estimate 
\begin{equation}
\label{eq:pkp}
\sqrt{\frac{p_k(\xi)}{p_k(\eta)}}\leq 1+\frac{|\eta-\xi|}{|k|(1+|\frac{\eta}{k}-t|)}.
\end{equation}
\end{lemma}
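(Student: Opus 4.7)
The key structural observation is that $\sqrt{p_k(t,\eta)} = \sqrt{k^2 + (\eta - kt)^2}$ is the Euclidean norm of the vector $(k, \eta - kt) \in \RR^2$, so it obeys the triangle inequality and admits the lower bound $\sqrt{p_k(t,\eta)} \ge \max(|k|, |\eta - kt|) \ge \tfrac{1}{2}|k|(1 + |\eta/k - t|)$. The second inequality \eqref{eq:pkp} follows immediately by applying the triangle inequality to the vectors $(k, \xi - kt)$ and $(k, \eta - kt)$, which gives $\sqrt{p_k(\xi)} \le \sqrt{p_k(\eta)} + |\xi - \eta|$, and then dividing by $\sqrt{p_k(\eta)}$ and invoking the lower bound above.

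For \eqref{bd:pp} the starting point is the decomposition
\begin{equation*}
\xi - \ell t = (\xi - \eta) + (\eta - kt) - (\ell - k) t, \qquad \ell = k + (\ell - k),
\end{equation*}
which via Young's inequality yields $p_\ell(\xi) \lesssim p_k(\eta) + (\xi - \eta)^2 + (k-\ell)^2 + (k-\ell)^2 t^2$. The three ``diagonal'' pieces are harmlessly absorbed by $\jap{|k-\ell, \eta-\xi|}^2 p_k(\eta)$; the whole argument reduces to controlling the commutator $(k-\ell)^2 t^2$, which I would treat in two cases according to whether $t \in I_{k,\eta}$.

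Outside the resonance ($t \notin I_{k,\eta}$), the definition gives $|\eta - kt| \ge |\eta|/(2|k|)$, whence $\sqrt{p_k(\eta)} \gtrsim |k| + |\eta|/|k|$. Writing $t = (t - \eta/k) + \eta/k$ and bounding each summand by $\sqrt{p_k(\eta)}/|k|$ one obtains $(k-\ell)^2 t^2 \lesssim (k-\ell)^2 p_k(\eta)$, hence $\sqrt{p_\ell/p_k} \lesssim \jap{|k-\ell, \eta-\xi|}$, considerably stronger than the $\jap{\cdot}^3$ allowed by the second branch of \eqref{bd:pp}.

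Inside the resonance ($t \in I_{k,\eta}$), the constraint $|\eta/k - t| \le |\eta|/(2k^2)$ yields $|t| \le \tfrac{3}{2}|\eta|/|k|$ and thus $(k-\ell)^2 t^2 \lesssim (k-\ell)^2 \eta^2/k^2$. Combined with $\sqrt{p_k(\eta)} \gtrsim |k|(1+|\eta/k-t|)$ this gives
\begin{equation*}
\sqrt{p_\ell(\xi)/p_k(\eta)} \lesssim 1 + \frac{|\xi-\eta| + |k-\ell|}{|k|(1+|\eta/k-t|)} + \frac{|\eta||k-\ell|}{k^2(1+|\eta/k-t|)}.
\end{equation*}
If additionally $t \in I_{\ell,\xi}$, the extra bound $|\xi - \ell t| \le |\xi|/(2|\ell|)$ and the resulting near-equality $|\eta/k - \xi/\ell| \le |\eta|/(2k^2) + |\xi|/(2\ell^2)$ allow one to compare $\sqrt{p_\ell(\xi)}$ and $\sqrt{p_k(\eta)}$ directly and to absorb all three pieces into $\jap{|k-\ell,\eta-\xi|}^3$. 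If instead $t \in I_{\ell,\xi}^c$ (Case A of the lemma), the last term on the right is already of the target form $\jap{\cdot}\,|\eta|/(k^2(1+|\eta/k-t|))$, and the remaining contributions must be bounded by the same target using the available $\jap{|k-\ell,\eta-\xi|}^3$ slack. This last matching is the main technical obstacle: one has to trade powers of $|\xi-\eta|$ and $|k-\ell|$ in the numerator against the $|k|(1+|\eta/k-t|)$ in the denominator, carefully exploiting that in Case A the resonance width $|\eta|/k^2$ controls how small $|\eta|/(k^2(1+|\eta/k-t|))$ can be relative to the frequency separation $(k-\ell, \eta-\xi)$.
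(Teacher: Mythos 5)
Your handling of \eqref{eq:pkp} is fine (the triangle inequality $\sqrt{p_k(\xi)}\le\sqrt{p_k(\eta)}+|\xi-\eta|$ plus $\sqrt{p_k(\eta)}\ge\tfrac12|k|(1+|\eta/k-t|)$ gives the stated bound with a harmless factor $2$, which is all that is ever used; this is essentially the paper's own one-line argument). The same goes for the non-resonant case $t\notin I_{k,\eta}$ of \eqref{bd:pp}. But there is a genuine gap exactly where the content of the lemma lies: the branch $t\in I_{k,\eta}\cap I_{\ell,\xi}^c$ is not proved. After your decomposition you are left with $1+\frac{|\xi-\eta|+|k-\ell|}{|k|(1+|\eta/k-t|)}$ and you state that these terms ``must be bounded by the same target'' $\jap{|k-\ell,\eta-\xi|}^3\frac{|\eta|}{k^2(1+|\eta/k-t|)}$, explicitly calling this the main technical obstacle without carrying it out. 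The case $t\in I_{k,\eta}\cap I_{\ell,\xi}$ is likewise only asserted (``allow one to compare \dots and to absorb''), whereas it needs the standard argument that two resonant intervals with $k\neq\ell$ can only overlap if $|k-\ell,\eta-\xi|$ is comparable to $|\eta|/|k|$. So the two cases that make the lemma nontrivial are left open.

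Moreover, the missing matching cannot be closed in the generality in which you attempt it: with the permissive definition \eqref{def:resonant} (no restriction relating $|\eta|$ and $k^2$), the first branch of \eqref{bd:pp} is actually false when $|\eta|\ll k^2$. For instance $k=\ell=1$, $\eta=1/2$, $\xi=0$, $t=1/2$ gives $t\in I_{k,\eta}\cap I_{\ell,\xi}^c$ and $\sqrt{p_\ell(\xi)/p_k(\eta)}=\sqrt{5}/2\approx 1.12$, while the right-hand side equals $\jap{1/2}^3\cdot\tfrac12\approx 0.70$; no trading of powers of $|\xi-\eta|,|k-\ell|$ against $|k|(1+|\eta/k-t|)$ can repair this. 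The paper itself does not prove \eqref{bd:pp}: it cites \cite[Lemma 4.14]{BBCZD23}, where the resonant intervals are defined more restrictively (in particular nonempty only when $k^2\lesssim|\eta|$). Under that restriction one has $1+|\eta/k-t|\lesssim|\eta|/k^2$ on $I_{k,\eta}$, hence $\frac{|\eta|}{k^2(1+|\eta/k-t|)}\gtrsim1$ and $\sqrt{p_k(\eta)}\approx|k|(1+|\eta/k-t|)$, so the resonant branch reduces to $\sqrt{p_\ell(\xi)}\lesssim\jap{|k-\ell,\eta-\xi|}^3\,|\eta|/|k|$, which follows at once from $|\ell|\le\jap{k-\ell}|k|\le\jap{k-\ell}|\eta|/|k|$, $|\eta-kt|\le|\eta|/(2|k|)$, $|t|\le\tfrac32|\eta|/|k|$ and $|\xi-\eta|\le\jap{\xi-\eta}^2|\eta|/|k|$; your leftover ``diagonal'' terms are absorbed the same way. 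In short, the obstacle you identify is resolved by the restricted range of the resonant intervals inherited from \cite{BBCZD23} (which the paper leaves implicit), not by finer frequency bookkeeping, and your write-up stops short of either.
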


\begin{proof}
This lemma is a version of \cite[Lemma 4.14]{BBCZD23}, where \eqref{bd:pp} is proved.  The bound \eqref{eq:pkp} is in the proof of \cite[Lemma 4.14]{BBCZD23} as well, which follows by 
\begin{align}
\sqrt{\frac{p_k(\xi)}{p_k(\eta)}}=\frac{1+|\frac{\xi}{k}-t|}{1+|\frac{\eta}{k}-t|}\leq 1+\frac{|(\frac{\xi}{k}-\frac{\eta}{k})+(\frac{\eta}{k}-t)|-|\frac{\eta}{k}-t|}{1+|\frac{\eta}{k}-t|}\leq 1+\frac{|\eta-\xi|}{|k|(1+|\frac{\eta}{k}-t|)}.
\end{align}
\end{proof}

The following \textit{lossy elliptic estimate} enables us to exploit the invisicid damping by paying regularity.
\begin{lemma}
\label{lem:elliptic}
For any $s\geq0$
\begin{equation}
\norm{(-\Delta_L)^{-1}F}_{H^{s}}\lesssim \frac{1}{\jap{t}^2}\norm{F}_{H^{s+2}}.
\end{equation}
\end{lemma}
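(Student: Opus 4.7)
The plan is to prove the bound on the Fourier side, since both $(-\Delta_L)^{-1}$ and the $H^s$ norm are Fourier multipliers. Recalling that the symbol of $-\Delta_L$ is $p_k(t,\eta)=k^2+(\eta-kt)^2$, Plancherel gives
\begin{equation}
\norm{(-\Delta_L)^{-1}F}_{H^s}^2=\sum_{k\in\ZZ}\int_{\RR}\frac{\jap{|k,\eta|}^{2s}}{p_k(t,\eta)^2}|\hat F_k(\eta)|^2\,\dd\eta,
\end{equation}
so the claim reduces to the pointwise symbol inequality
\begin{equation}
\frac{\jap{t}^{4}}{p_k(t,\eta)^{2}}\lesssim \jap{|k,\eta|}^{4},\qquad k\neq 0,
\end{equation}
which is equivalent to $\jap{t}^2\lesssim \jap{|k,\eta|}^2 p_k(t,\eta)$. (The statement is understood for $F$ with no zero-$x$-frequency component; otherwise $\partial_{YY}^{-1}$ is unbounded and the inequality fails, so I would either assume $F=F_{\neq}$ or note that this is how the lemma is applied in Lemma \ref{lem:enid}.)

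The main step is then a two-case split, quantifying how far $t$ is from the critical time $\eta/k$. If $|\eta-kt|\ge |k|t/2$, then directly $p_k(t,\eta)\ge (\eta-kt)^2\gtrsim k^2 t^2\gtrsim \jap{t}^2$ and the bound follows trivially (using $\jap{|k,\eta|}\ge 1$). If instead $|\eta-kt|<|k|t/2$, the triangle inequality yields $|\eta|\ge |k|t-|\eta-kt|\ge |k|t/2\ge t/2$, so $\jap{\eta}\gtrsim \jap{t}$; combined with $p_k(t,\eta)\ge k^2\ge 1$, this gives $\jap{|k,\eta|}^2 p_k(t,\eta)\ge \jap{\eta}^2\gtrsim \jap{t}^2$. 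For very small $t$ (say $t\le 1$) the inequality is immediate since $\jap{t}\approx 1$ and $\jap{|k,\eta|}^2 p_k(t,\eta)\gtrsim 1$ for $k\neq 0$.

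I do not expect any real obstacle here: the proof is a routine case analysis on the symbol, and the only subtle point is the implicit restriction to $k\neq 0$ (or more precisely, to modes where $p_k(t,\eta)$ is bounded below) so that the symbol inversion makes sense. Once the pointwise inequality is established, summing against $|\hat F_k(\eta)|^2$ and using Plancherel gives the desired norm bound.
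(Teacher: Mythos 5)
Your proof is correct and takes essentially the same route as the paper: reduce by Plancherel to the pointwise symbol bound $\jap{t}^2\lesssim \jap{|k,\eta|}^2\,p_k(t,\eta)$ for $k\neq 0$, which the paper gets directly from the elementary inequality $\jap{a-b}\jap{b}\gtrsim\jap{a}$ (with $a=t$, $b=\eta/k$, so $\jap{t}\lesssim\jap{\eta-kt}\jap{\eta}$ for $|k|\geq1$) and which your two-case split on $|\eta-kt|\gtrless |k|t/2$ simply reproves by hand. Your caveat that the statement is to be understood for $F=F_{\neq}$ (i.e.\ only $k\neq0$ modes) is consistent with how the lemma is applied in the paper.
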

The proof of this lemma is an application of the inequality $\jap{a-b}\jap{b}\gtrsim \jap{a}$, see \cite{BM15}.
We also record the following bounds that follows directly by the definition of $m^d$, see \eqref{def:md},
\begin{equation}
\label{bd:pneq0}
\sqrt{\frac{k^2}{p_k(t,\eta)}}=\frac{1}{\sqrt{C_\beta}}\sqrt{\frac{\de_tm^d_k(t,\eta)}{m^d_k(t,\eta)}}, \qquad \frac{|k|}{p_k(t,\eta)}\lesssim\sqrt{\frac{\de_tm^d_k(t,\eta)}{m^d_k(t,\eta)}}\sqrt{\frac{k^2}{p_k(t,\eta)}}.
\end{equation}

\subsection{Bounds on the symmetric variables}
In this section, we aim at proving that \eqref{Bsym} holds true with $10$ replaced by $5$. Looking at the energy identity \eqref{bd:dtEsym} and the definition of $\sfT_{\sfsym}$ \eqref{def:Tsym} and $\sfS_{\sfsym}$ \eqref{def:Ssym}, we see that all the nonlinear error terms are of the following type:
\begin{align}
\label{def:cTsym}
&\cT_{\sfsym}(F,G,H):=\left|\jap{\sqrt{\frac{k^2}{p}}m\cF\left(\nabla^\perp\Delta^{-1}_L F\cdot \nabla G \right), \hat{H}}\right|\\
\label{def:cSym}&\cS_{\sfsym}(F,G,H):=\left|\jap{\sqrt{\frac{k^2}{p}}m\bigg(\frac{\de_t p}{p}\hat{F}*\big(\hat{G}-2\frac{\ell^2}{p}\hat{G}\big)\bigg),H}\right|.
\end{align}
Moreover, in terms of bound to perform, thanks to the definition of the functionals \eqref{def:Esym}--\eqref{def:D0} and the bootstrap hypotheses \eqref{Bsym}--\eqref{B0}, we see that there is actually no difference between $(\Psi,\Omega)$ and $(\Phi,J)$. In the next lemma we collect the bounds we need for the transport and stretching nonlinearities respectively.
%
\begin{lemma}
\label{lem:keyEsym}
Let $m$ be the Fourier multiplier defined in \eqref{def:m} with $N>10$. The following inequalities holds true: 
\begin{align}
\label{bd:Tsimneqneq}
\cT_{\sfsym}(F_{\neq},G_{\neq},H)\lesssim\, &\e^{-\delta_0\nu^\frac13 t}\norm{mF_{\neq}}_{L^2}\norm{\sqrt{\frac{k^2}{p}}mG_{\neq}}_{L^2}\norm{\sqrt{\frac{\de_t m^d}{m^d}}H}_{L^2}\\
&\,+\frac{1}{\jap{t}}\e^{-\delta_0\nu^\frac13 t}\norm{mF_{\neq}}_{L^2}\norm{\sqrt{\frac{k^2}{p}}m|\nabla_L|G_{\neq}}_{L^2}\norm{H}_{L^2}\\
&\,+ \jap{t} \e^{-\delta_0\nu^\frac13 t}\norm{mG_{\neq}}_{L^2}\norm{\sqrt{\frac{\de_t m^d}{m^d}}\sqrt{\frac{k^2}{p}}mF_{\neq}}_{L^2}\norm{\sqrt{\frac{\de_t m^d}{m^d}}H}_{L^2}\\
&\, +\e^{-\delta_0\nu^\frac13 t}\norm{mG_{\neq}}_{L^2}\norm{\sqrt{\frac{k^2}{p}}mF_{\neq}}_{L^2}\norm{\sqrt{\frac{\de_t m^d}{m^d}}H}_{L^2}.
\end{align}
Denoting $(\nabla^\perp\Delta_L^{-1}F)_0=(V_{F,0}^1,0)$, one has
\begin{align}
\label{bd:Tsim0neq}
\cT_{\sfsym}(F_{0},G_{\neq},H)\lesssim\,& \norm{V_{F,0}^1}_{H^N}\norm{\sqrt{\frac{k^2}{p}}m\de_X G_{\neq}}_{L^2}\norm{H}_{L^2}\\
&+\norm{\de_YV_{F,0}^1}_{H^N}\norm{\sqrt{\frac{k^2}{p}}m G_{\neq}}_{L^2}\norm{\sqrt{\frac{\de_t m^d}{m^d}}H}_{L^2}.
\end{align}
Moreover 
\begin{align}
\label{bd:Tsimneq0}
\cT_{\sfsym}(F_{\neq},G_{0},H)\lesssim\, &\norm{G_0}_{H^{3}}\norm{\sqrt{\frac{\de_tm^d}{m^d}}\sqrt{\frac{k^2}{p}}mF_{\neq}}_{L^2}\norm{\sqrt{\frac{\de_t m^d}{m^d}}H}_{L^2}\\
&+\frac{1}{\jap{t}^2}\norm{mF_{\neq}}_{L^2}\norm{\de_YG_0}_{H^{N}}\norm{\sqrt{\frac{\de_tm^d}{m^d}}H}_{L^2}.
\end{align}
For the stretching nonlinearities we have the following
\begin{align}
\label{bd:Ssimneqneq}
&\cS_{\sfsym}(F,G_{\neq},H)\lesssim\, \e^{-\delta_0\nu^\frac13t}\norm{\sqrt{\frac{k^2}{p}}mF_{\neq}}_{L^2}\norm{mG_{\neq}}_{L^2}\norm{\sqrt{\frac{\de_t m^d}{m^d}}H}_{L^2},\\
\label{bd:Ssimneq0}
&\cS_{\sfsym}(F,G_{0},H)\lesssim\,\left(\norm{\sqrt{\frac{k^2}{p}}mF_{\neq}}_{L^2}\norm{G_{0}}_{H^3} + \frac{1}{\jap{t}}\norm{mF_{\neq}}_{L^2}\norm{G_{0}}_{H^N}\right)\norm{\sqrt{\frac{\de_t m^d}{m^d}}H}_{L^2}.
\end{align}
\end{lemma}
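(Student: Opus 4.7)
The plan is to handle all the nonlinear terms through a paraproduct decomposition \eqref{def:LH}, combined with the commutator estimates of Lemma \ref{lem:commp}, the lossy elliptic bound of Lemma \ref{lem:elliptic}, and the key relation \eqref{bd:pneq0}, which converts the weight $\sqrt{k^2/p}$ into a gain of $\sqrt{\de_tm^d/m^d}$. The first reduction is algebraic: a direct computation using the cancellation $\nabla^\perp_L F\cdot \nabla_L G = \nabla F\cdot\nabla G$ shows that the explicit time-dependent terms in the Fourier symbol of $\nabla^\perp \Delta_L^{-1}F \cdot \nabla G$ cancel, leaving the kernel $-(\ell\eta - k\xi)/p_{k-\ell}(t,\eta-\xi)$ acting on $\hat F_{k-\ell}(\eta-\xi)\hat G_\ell(\xi)$. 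The numerator is bounded by $|\ell||\eta-\xi| + |k-\ell||\xi|$, which feeds either the $G$ slot (transport case) or a half-derivative to the $F$ slot (reaction case).

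For the non-zero/non-zero transport bound \eqref{bd:Tsimneqneq} I would decompose the bilinear as $F^{Hi}_\neq \cdot (\nabla G_\neq)^{Lo} + F^{Lo}_\neq \cdot (\nabla G_\neq)^{Hi}$. In the high-low (transport) piece $\jap{|k,\eta|}^N\lesssim \jap{|k-\ell,\eta-\xi|}^N$, so the full Sobolev factor is absorbed into $m_{k-\ell}(\eta-\xi)$, placing the $H^N$-weighted norm on $F$; the kernel factor $|k-\ell||\xi|/p_{k-\ell}$ combined with $\sqrt{k^2/p_k}\,m_k$ on the exterior yields, after using Lemma \ref{lem:commp} to compare $p_k(\eta)$ with $p_\ell(\xi)$, the summands carrying $\norm{\sqrt{k^2/p}mG_\neq}_{L^2}$ and the dissipation gain $\sqrt{\de_tm^d/m^d}$ on $H$. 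The extra $\e^{-\delta_0\nu^{1/3}t}$ prefactor compensates the imbalance between the two copies of $m$ implicit on the left of \eqref{def:cTsym} and the three copies present in the norms on the right. In the low-high (reaction) piece $m_k(\eta)\lesssim m_\ell(\xi)$ up to bounded ratios of the $m^d,m^\nu,m^s$ weights; the dangerous kernel factor $k\xi/p_{k-\ell}$ is handled either by restricting to the resonant window $t\in I_{k,\eta}$, where Lemma \ref{lem:commp} forces $\sqrt{p_\ell(\xi)/p_k(\eta)}\lesssim |\eta|/(k^2(1+|\eta/k-t|))\lesssim\jap{t}/|k|$ (giving the $\jap{t}^{-1}$ inviscid-damping summand with $|\nabla_L|G$), or off resonance, where the Lemma loses at most $\jap{|k-\ell,\eta-\xi|}^3$ in low frequency, absorbed by the Sobolev weight, and the residual $\jap{t}$-growth is purchased by the $\sqrt{\de_tm^d/m^d}$ gain simultaneously on $F$ and $H$.

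The zero-mode bounds are structurally easier. For \eqref{bd:Tsim0neq}, incompressibility forces $(\nabla^\perp\Delta_L^{-1}F)_0=(V^1_{F,0},0)$, so the nonlinearity is $V^1_{F,0}\de_XG_\neq$; a standard high-low paraproduct with $\de_X$ absorbed into the outer weight gives the first summand, and the commutator piece forces $\de_YV^1_{F,0}$ as a coefficient, then uses \eqref{bd:pneq0} for the $\sqrt{\de_tm^d/m^d}$ gain on $H$. For \eqref{bd:Tsimneq0}, the low-high piece invokes Lemma \ref{lem:elliptic} to gain $\jap{t}^{-2}$ from $\Delta_L^{-1}F_\neq$ at the cost of two derivatives on $F_\neq$ absorbed by $\norm{mF_\neq}_{L^2}$, whereas the high-low piece pushes all derivatives onto $G_0$ at low Sobolev index $H^3$, converting $|k|/p_k$ into $\sqrt{\de_tm^d/m^d}\sqrt{k^2/p}$ via \eqref{bd:pneq0}. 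The stretching bounds \eqref{bd:Ssimneqneq}--\eqref{bd:Ssimneq0} follow the same template: $|\de_tp/p|\le 2|k-\ell|/\sqrt{p_{k-\ell}}$ acts as a half-derivative on $F$ that pairs with the exterior $\sqrt{k^2/p_k}$, while $|1-2\ell^2/p_\ell|\le 1$ makes the $G$-side pointwise dominated by $\hat G$ itself, so the paraproduct analysis proceeds essentially verbatim.

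The principal difficulty lies in the low-high (reaction) contribution to \eqref{bd:Tsimneqneq} near the Orr critical time $t=\eta/k$, where a naive bound loses $|k\xi|/p_{k-\ell}$ with only the weight ratio to offset it. Splitting into $t\in I_{k,\eta}$ versus its complement, together with the conversion \eqref{bd:pneq0} and the enhanced-dissipation weight $m^\nu$, is what turns the potential $\jap{t}$-loss into a time-integrable contribution inside $\sfD_{\sfsym}$; this is the mechanism that ultimately forces, but barely permits, the $\nu^{2/3}$ smallness threshold of Theorem \ref{th:main}.
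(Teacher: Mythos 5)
Your overall template (paraproduct, Lemma \ref{lem:commp}, Lemma \ref{lem:elliptic}, the conversion \eqref{bd:pneq0}, and the trick of trading the spare copy of $m$ for $\e^{-\delta_0\nu^{1/3}t}$) is the right one, and your treatment of the zero-mode bounds \eqref{bd:Tsim0neq}--\eqref{bd:Tsimneq0} and of the stretching terms matches the paper. The genuine gap is in the high--low piece of \eqref{bd:Tsimneqneq}, i.e.\ $F^{Hi}_{\neq}(\nabla G_{\neq})^{Lo}$. You claim that after putting the $H^N$ weight on $F$ and ``using Lemma \ref{lem:commp} to compare $p_k(\eta)$ with $p_\ell(\xi)$'' this piece lands in the first summand, with a single gain $\sqrt{\de_tm^d/m^d}$ on $H$ and no growth in $t$. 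This is not attainable: near the critical time of the \emph{high}-frequency factor (take $\ell=1$, $\xi=0$, $\eta-\xi\approx(k-\ell)t$, so $p_{k-\ell}(\eta-\xi)\approx (k-\ell)^2$ while the kernel numerator $|\ell(\eta-\xi)|\approx |k-\ell|t$) the required multiplier inequality fails by a factor of order $\jap{t}$, since $\sqrt{\ell^2/p_\ell(\xi)}\approx \jap{t}^{-1}$ on the right. Moreover, invoking \eqref{bd:pp} in this configuration loses $\jap{|k-\ell,\eta-\xi|}^3$ on the \emph{high} frequency, which cannot be absorbed because $m_{k-\ell}(\eta-\xi)$ is already spent matching $\jap{|k,\eta|}^N$. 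The paper instead uses the antisymmetry $\nabla^\perp A\cdot\nabla B=-\nabla^\perp B\cdot\nabla A$ to rewrite $\cT_{\sfsym}(F^{Hi}_{\neq},G^{Lo}_{\neq},H)=\cT_{\sfsym}(\Delta_LG^{Lo}_{\neq},\Delta_L^{-1}F^{Hi}_{\neq},H)$, so that the inverse Laplacian sits on the high-frequency factor, and then splits on $t\in I_{\ell,\xi}$: at resonance one accepts a $\jap{t}$ loss compensated by \emph{two} factors $\sqrt{\de_tm^d/m^d}$ (on $F$ and on $H$) --- this is exactly why the third summand of \eqref{bd:Tsimneqneq} is there --- while off resonance $|\xi/\ell-t|\gtrsim|\xi|/\ell^2$ gives the fourth summand. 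Without this step (or an equivalent one) your high--low estimate does not close.

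A secondary, related confusion: within the low--high piece you attribute the $\jap{t}^{-1}\,|\nabla_L|G$ term to the resonant window $t\in I_{k,\eta}$ and the ``$\jap{t}$ purchased by $\sqrt{\de_tm^d/m^d}$ on $F$ and $H$'' to the off-resonant region; the mechanism is the other way around and relies crucially on Lemma \ref{lem:elliptic} applied to the low-frequency factor $\Delta_L^{-1}F_{\neq}$, which you do not invoke here. In the resonant region $t\in I_{k,\eta}\cap I^c_{\ell,\xi}$ one keeps the factor $(1+|\eta/k-t|)^{-1}\approx\sqrt{\de_tm^d_k/m^d_k}$ for $H$, bounds $|\eta||\ell,\xi|/k^2\lesssim\jap{t}^2\jap{|k-\ell,\eta-\xi|}^2$, and cancels the $\jap{t}^2$ against the $\jap{t}^{-2}$ from the lossy elliptic estimate on $F$, yielding the first summand; in the non-resonant region \eqref{bd:pp} costs only $\jap{|k-\ell,\eta-\xi|}^3$ and the single $\jap{t}$ from $|\ell,\xi|\lesssim\jap{t}|\ell,\xi-\ell t|$ against the same $\jap{t}^{-2}$ yields the second summand. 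With your assignment the exponents do not add up to any of the four summands, so even the low--high half needs to be reorganized along these lines.
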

\begin{remark}
The bounds on $\cT_{\sfsym}(F_0,G_\neq,H)$ are not optimal since we could exploit commutators to avoid losing an $x$-derivative on $G_{\neq}$. However, for the threshold $\eps\ll \nu^{\frac23}$ this does not seem necessary.
\end{remark}
Before proving the key lemma above, we first show how to improve the bootstrap hypothesis \eqref{Bsym} with the estimates in Lemma \ref{lem:keyEsym}.\begin{proof}[Proof: improvement of \eqref{Bsym}]
Since $|\de_tp|/p\leq 1$, in the nonlinear term $\sfT_{\sfsym}$ (see \eqref{def:Tsym}) we can just study, for example, the term 
\begin{equation}
\cT_{\sfsym}(\Omega,\Omega,mZ)\leq \cT_{\sfsym}(\Omega_{\neq},\Omega_{\neq},mZ) +\cT_{\sfsym}(\Omega_0,\Omega_{\neq},mZ)+\cT_{\sfsym}(\Omega_{\neq},\Omega_{0},mZ)
\end{equation}
where we used that $\Delta_L^{-1}\Omega=\Psi$. Recalling the definition of $Z$ given in \eqref{def:ZQ}, applying \eqref{bd:Tsimneqneq} we deduce that 
\begin{align}
\cT_{\sfsym}(\Omega_{\neq},\Omega_{\neq},mZ)\lesssim \,&\e^{-\delta_0\nu^\frac13 t}\norm{m\Omega_{\neq}}_{L^2}\norm{mZ}_{L^2}\norm{\sqrt{\frac{\de_t m^d}{m^d}}mZ}_{L^2}\\
&\,+\frac{1}{\jap{t}}\e^{-\delta_0\nu^\frac13 t}\norm{m\Omega_{\neq}}_{L^2}\norm{|\nabla_L|mZ}_{L^2}\norm{mZ}_{L^2}\\
&\,+ \jap{t} \e^{-\delta_0\nu^\frac13 t}\norm{m\Omega_{\neq}}_{L^2}\norm{\sqrt{\frac{\de_t m^d}{m^d}}mZ}_{L^2}\norm{\sqrt{\frac{\de_t m^d}{m^d}}mZ}_{L^2}
\end{align}
From the definitions of $\sfE_{\sfsym},\sfE_{\sfho}$ and $\sfD_{\sfsym}$, see respectively \eqref{def:Esym}, \eqref{def:Eho} and \eqref{def:Dsym}, we rewrite this bound as
\begin{align}
\cT_{\sfsym}(\Omega_{\neq},\Omega_{\neq},mZ)\lesssim \e^{-\delta_0\nu^\frac13 t}\sqrt{\sfE_{\sfho}}\sqrt{\sfD_{\sfsym}}\left(\sqrt{\sfE_{\sfsym}}+\frac{1}{\jap{t}}\nu^{-\frac12}\sqrt{\sfE_{\sfsym}}+\jap{t}\sqrt{\sfD_{\sfsym}}\right).
\end{align}
Appealing to the boostrap hypothesis \eqref{Bsym}--\eqref{Bho}, we get 
\begin{align}
\cT_{\sfsym}(\Omega_{\neq},\Omega_{\neq},mZ)&\lesssim \e^{-\delta_0\nu^\frac13 t}\eps\jap{t}\sqrt{\sfD_{\sfsym}}\left(\eps+\frac{1}{\jap{t}}\nu^{-\frac12}\eps+\jap{t}\sqrt{\sfD_{\sfsym}}\right)\\
&\lesssim(\eps^2\nu^{-\frac13}+\eps^2\nu^{-\frac12})\e^{-\delta_0\nu^\frac13 t/2}\sqrt{\sfD_{\sfsym}}+\eps\nu^{-\frac23}\sfD_{\sfsym}\\
&\lesssim \eps\nu^{-\frac23}\sfD_{\sfsym}+\eps^2(\eps\nu^{-\frac23})\nu^\frac13\e^{-\delta_0\nu^\frac13 t}.
\end{align}
Integrating in time and using the bootstrap hypotheses, we have
\begin{align}
\label{bd:cTB1}
\int_0^t\cT_{\sfsym}(\Omega_{\neq},\Omega_{\neq},mZ)\dd \tau\dd \tau\lesssim (\eps\nu^{-\frac23}) \eps^2.
\end{align}
Since $\nabla^\perp\Delta_L^{-1}\Omega_0=U_0^1$ and $|\de_X|\leq |\nabla_L|$, from \eqref{bd:Tsim0neq} we get 
\begin{align}
\cT_{\sfsym}(\Omega_{0},\Omega_{\neq},mZ)\lesssim\,& \norm{U_{0}^1}_{H^N}\norm{m\de_X Z}_{L^2}\norm{mZ}_{L^2}+\norm{\de_YU_{0}^1}_{H^N}\norm{m Z}_{L^2}\norm{\sqrt{\frac{\de_t m^d}{m^d}}mZ}_{L^2}\\
\lesssim \, & \nu^{-\frac12}\sqrt{\sfE_{0}}\sqrt{\sfE_{\sfsym}}\sqrt{\sfD_{\sfsym}}+\nu^{-\frac12}\sqrt{\sfD_{0}}\sqrt{\sfE_{\sfsym}}\sqrt{\sfD_{\sfsym}}.
\end{align}
From the property \eqref{eq:keymnu}, we know that 
\begin{equation}
\label{bd:ED}
\sqrt{\sfE_{\sfsym}}\lesssim \nu^{-\frac16}\sqrt{\sfD_{\sfsym}}.
\end{equation}
Using the bootstrap hypotheses we then deduce 
\begin{equation}
\label{bd:cTB2}
\int_0^t\cT_{\sfsym}(\Omega_{0},\Omega_{\neq},mZ)\lesssim \eps\nu^{-\frac23}\int_0^t\sfD_{\sfsym}\dd \tau+\eps\nu^{-\frac12}\int_0^t\sfD_0\dd \tau\lesssim  (\eps\nu^{-\frac23})\eps^2.
\end{equation}

For the last term of the transport nonlinearity, since $\norm{\Omega_0}_{H^3}\lesssim \norm{U_0^1}_{H^4}\lesssim \sqrt{\sfE_0}$, applying \eqref{bd:Tsimneq0} we have
\begin{align}
\cT_{\sfsym}(\Omega_{\neq},\Omega_{0},mZ)\lesssim\, &\norm{\Omega_0}_{H^{3}}\norm{\sqrt{\frac{\de_tm^d}{m^d}}mZ}_{L^2}\norm{\sqrt{\frac{\de_t m^d}{m^d}}mZ}_{L^2}\\
&+\frac{1}{\jap{t}^2}\norm{m\Omega_{\neq}}_{L^2}\norm{\de_Y\Omega_0}_{H^{N}}\norm{\sqrt{\frac{\de_tm^d}{m^d}}mZ}_{L^2}\\
\lesssim\, &\sqrt{\sfE_0}\sfD_{\sfsym}+\frac{\nu^{-\frac12}}{\jap{t}}\sqrt{\sfE_{\sfho}}\sqrt{\sfD_0}\sqrt{\sfD_{\sfsym}}.
\end{align}
Using the bootstrap assumptions we deduce 
\begin{align}
\label{bd:cTB3}
\int_0^t\cT_{\sfsym}(\Omega_{\neq},\Omega_{0},mZ)\dd \tau\lesssim\, (\eps\nu^{-\frac12})\int_0^t(\sfD_{\sfsym}+\sfD_0)\dd \tau\lesssim (\eps\nu^{-\frac12})\eps^2  \end{align}
The structure of all the other transport nonlinearities enables us to apply the exact same procedure to the term we have just controlled. Therefore, from the bounds \eqref{bd:cTB1}, \eqref{bd:cTB2} and \eqref{bd:cTB3} we conclude that
\begin{align}
\label{bd:Tsymfinal}
\int_0^t\sfT_{\sfsym}\dd \tau\lesssim (\eps\nu^{-\frac23})\eps^2.
\end{align}

Turning our attention to the stretching nonlinearities, we can again explicitly handle just one of them, say 
\begin{equation}
\cS_{\sfsym}(J,\Omega,mQ)\leq \cS_{\sfsym}(J,\Omega_{\neq},mQ)+\cS_{\sfsym}(J,\Omega_0,mQ).
\end{equation}
From \eqref{bd:Ssimneqneq} we get 
\begin{align}
\cS_{\sfsym}(J,\Omega_{\neq},mQ)&\lesssim\, \e^{-\delta_0\nu^\frac13t}\norm{mQ}_{L^2}\norm{m\Omega_{\neq}}_{L^2}\norm{\sqrt{\frac{\de_t m^d}{m^d}}mQ}_{L^2}\\
&\lesssim \e^{-\delta_0\nu^\frac13t}\sqrt{\sfE_{\sfsym}}\sqrt{\sfE_{\sfho}}\sqrt{\sfD_{\sfsym}}\lesssim \eps^2\jap{t}\e^{-\delta_0\nu^\frac13t}\sqrt{\sfD_{\sfsym}}
\end{align}
where we used the bootstrap hypotheses. Therefore, 
\begin{align}
\label{bd:SB1}
\int_0^t\cS_{\sfsym}(J,\Omega_{\neq},mQ)\dd \tau\lesssim (\eps \nu^{-\frac12})\int_0^t(\sfD_{\sfsym}+\eps^2\nu^{\frac13}\e^{-\delta_0\nu^\frac13\tau})\dd \tau\lesssim  (\eps \nu^{-\frac12})\eps^2
\end{align}
Looking at \eqref{bd:Ssimneq0}, we get
\begin{align}
\cS_{\sfsym}(J,\Omega_{0},mQ)&\lesssim\,\left(\norm{mQ}_{L^2}\norm{\Omega_{0}}_{H^3} + \frac{1}{\jap{t}}\norm{mJ_{\neq}}_{L^2}\norm{\Omega_{0}}_{H^N}\right)\norm{\sqrt{\frac{\de_t m^d}{m^d}}mQ}_{L^2}.
\end{align}
Now we observe that 
\begin{align}
\norm{\Omega_{0}}_{H^N}=\norm{\de_YU_0^1}_{H^N}\lesssim \nu^{-\frac12}\sqrt{\sfD_{0}}.
\end{align}
Thus, using again \eqref{bd:ED}, we have
\begin{align}
\label{bd:SB2}
\cS_{\sfsym}(J,\Omega_{0},mQ)&\lesssim \sqrt{\sfD_{\sfsym}}(\sqrt{\sfE_{\sfsym}}\sqrt{\sfE_{0}}+\frac{\nu^{-\frac12}}{\jap{t}}\sqrt{\sfE_{\sfho}}\sqrt{\sfD_{0}})\\
&\lesssim (\eps\nu^{-\frac16}+\eps\nu^{-\frac12})\sfD_{\sfsym}+\eps \nu^{-\frac12}\sfD_0
\end{align}
Arguing similarly for the other stretching term, combining \eqref{bd:SB1} with \eqref{bd:SB2} we get
\begin{align}
\label{bd:Ssymfinal}
\int_0^t\sfS_{\sfsym}\dd \tau\lesssim (\eps\nu^{-\frac12})\eps^2
\end{align}
Finally, using the bound above and \eqref{bd:Tsymfinal}, integrating in time the energy inequality \eqref{bd:dtEsym} we get 
\begin{align}
\sfE_{\sfsym}+\frac{1}{16}\int_0^t\sfD_{\sfsym}\dd \tau\leq \eps^2 +C(\eps \nu^{-\frac23})\eps^2,
\end{align}
where $C=C(N,\delta_0,\beta)>1$. By choosing $\eps_0\ll \nu^\frac23$, we improve the bound \eqref{Bsym} and conclude the proof.
\end{proof}
 
 We finally present the proof of Lemma \ref{lem:keyEsym}.
\begin{proof}[Proof of Lemma \ref{lem:keyEsym}]
We split the proof for each of the bounds \eqref{bd:Tsimneqneq}, \eqref{bd:Tsim0neq}, \eqref{bd:Tsimneq0}, \eqref{bd:Ssimneqneq} and \eqref{bd:Ssimneq0}.

\medskip 

$\bullet$ \textit{Proof of \eqref{bd:Tsimneqneq}:} appealing to the paraproduct decomposition \eqref{def:LH}, we see that 
\begin{align}
\cT_{\sfsym}(F_{\neq},G_{\neq},H)\leq \cT_{\sfsym}(F_{\neq}^{Lo},G_{\neq}^{Hi},H)+\cT_{\sfsym}(F_{\neq}^{Hi},G_{\neq}^{Lo},H)
\end{align}
We study separately the low-high and the high-low terms.
\medskip

\noindent  $\square$ \textit{Control of the low-high term.} 
We split the integral to handle separately the resonant and non-resonant case ($t\in I_{k,\eta}\cap I_{\ell,\xi}^c$ or not), that is 
 \begin{equation}
\cT_{\sfsym}(F_{\neq}^{Lo},G_{\neq}^{Hi},H)\leq \cT^R_{\sfsym}(F_{\neq}^{Lo},G_{\neq}^{Hi},H_{\neq})+\cT^{NR}_{\sfsym}(F_{\neq}^{Lo},G_{\neq}^{Hi},H),
 \end{equation}
 where we define 
 \begin{align}
 \notag \cT^R_{\sfsym}(F_{\neq}^{Lo},G_{\neq}^{Hi},H):=\sum_{k,\ell \in \mathbb{Z}}\iint_{\RR^2}&\mathbbm{1}_{\{t\in I_{k,\eta}\cap I_{\ell,\xi}^c\}}\sqrt{\frac{k^2}{p_k(t,\eta)}}m_k(t,\eta)\frac{|k-\ell,\eta-\xi|}{p_{k-\ell}(t,\eta-\xi)}|\hat{F}^{Lo}_{\neq}|_{k-\ell}(\eta-\xi)\\
 \label{def:TRsym}&\times |\ell,\xi| |\hat{G}^{Hi}|_{\ell}(\xi)|\hat{H}|_{k}(\eta) \dd \eta\dd \xi\\
\notag  \cT^{NR}_{\sfsym}(F_{\neq}^{Lo},G_{\neq}^{Hi},H):=\sum_{k,\ell \in \mathbb{Z}}\iint_{\RR^2}&\mathbbm{1}_{\{t\notin I_{k,\eta}\cap I_{\ell,\xi}^c\}}\sqrt{\frac{k^2}{p_k(t,\eta)}}m_k(t,\eta)\frac{|k-\ell,\eta-\xi|}{p_{k-\ell}(t,\eta-\xi)}|\hat{F}^{Lo}_{\neq}|_{k-\ell}(\eta-\xi)\\
 \label{def:TNRsym}&\times |\ell,\xi| |\hat{G}^{Hi}|_{\ell}(\xi)|\hat{H}|_{k}(\eta) \dd \eta\dd \xi.
 \end{align}
 By definition of the paraproduct \eqref{def:LH} we know that $|k,\eta|\leq 3|\ell,\xi|$. Hence, since $m^{d},m^{\nu},m^s$ are uniformly bounded Fourier multipliers, we deduce that 
 \begin{equation}
 \label{bd:mk}
 m_k(t,\eta)\lesssim m_\ell(t,\xi)
 \end{equation}
 For the non-resonant term, thanks to \eqref{lem:commp}, we also know that 
 \begin{equation}
 \label{bd:pk}
 \mathbbm{1}_{\{t\notin I_{k,\eta}\cap I_{\ell,\xi}^c\}}\sqrt{\frac{k^2}{p_k(t,\eta)}}\lesssim \jap{|k-\ell,\eta-\xi|}^4\mathbbm{1}_{\{t\notin I_{k,\eta}\cap I_{\ell,\xi}^c\}}\sqrt{\frac{\ell^2}{p_\ell(t,\xi)}},
 \end{equation}
 where we paid an extra derivative on the low-frequency piece since $|k|/|\ell|\lesssim\jap{k-\ell}$. Moreover, having that 
 \begin{equation}
 |\ell,\xi|\lesssim \jap{t}|\ell,\xi-\ell t|,
 \end{equation}
 combining the bound above with \eqref{bd:mk}, \eqref{bd:pk}, Cauchy-Schwartz and Young's convolution inequality we arrive at 
 \begin{align}
 \label{bd:TNR1}
 \cT^{NR}_{\sfsym}(F_{\neq}^{Lo},G_{\neq}^{Hi},H)&\lesssim \jap{t}\norm{(-\Delta_L)^{-1}F_{\neq}}_{H^7}\norm{\sqrt{\frac{k^2}{p}}m|\nabla_L|G_{\neq}}_{L^2}\norm{H}_{L^2}\\
 &\lesssim \frac{1}{\jap{t}}\e^{-\delta_0\nu^\frac13 t}\norm{mF_{\neq}}_{L^2}\norm{\sqrt{\frac{k^2}{p}}m|\nabla_L|G_{\neq}}_{L^2}\norm{H}_{L^2}.
 \end{align}
In the last inequality we used Lemma \eqref{lem:elliptic} combined with the fact that $\jap{|k,\eta|}^9\lesssim \e^{-\delta_0 \nu^\frac13 t}m_k(t,\eta)$ since $N>10$. This bound is in agreement with \eqref{bd:Tsimneqneq}.
 
 We now turn our attention to the resonant part. From Lemma \ref{lem:commp} we deduce
  \begin{equation}
 \label{bd:pkR}
 \mathbbm{1}_{\{t\in I_{k,\eta}\cap I_{\ell,\xi}^c\}}\sqrt{\frac{k^2}{p_k(t,\eta)}}|\ell,\xi|\lesssim \jap{|k-\ell,\eta-\xi|}^4\mathbbm{1}_{\{t\in I_{k,\eta}\cap I_{\ell,\xi}^c\}}\frac{|\eta|}{k^2(1+|\frac{\eta}{k}-t|)}|\ell,\xi|\sqrt{\frac{\ell^2}{p_\ell(t,\xi)}}.
 \end{equation}
Since $t\in I_{k,\eta}$ we know that $t\approx |\eta|/|k|$, so we get
 \begin{equation}
\frac{|\eta||\ell,\xi|}{k^2}\lesssim \jap{\eta-\xi}\frac{|\eta|^2}{k^2}+\jap{k-\ell}\frac{|\eta|}{k}\frac{\jap{\xi}}{k^2}\lesssim \jap{|k-\ell,\eta-\xi|}^2\jap{t}^2.
\end{equation}
Recalling the definition of $m^d$ \eqref{def:md}, combining the bound above with \eqref{bd:pkR} we obtain 
  \begin{equation}
 \label{bd:pkR1}
 \mathbbm{1}_{\{t\in I_{k,\eta}\cap I_{\ell,\xi}^c\}}\sqrt{\frac{k^2}{p_k(t,\eta)}}|\ell,\xi|\lesssim \jap{t}^2\jap{|k-\ell,\eta-\xi|}^6\sqrt{\frac{\de_tm^d_k(t,\eta)}{m^d_k(t,\eta)}}\sqrt{\frac{\ell^2}{p_\ell(t,\xi)}}.
 \end{equation}
 Therefore, appealing again to Lemma \ref{lem:elliptic}, we have 
 \begin{align}
 \label{bd:TR1}
 \cT^{R}_{\sfsym}(F_{\neq}^{Lo},G_{\neq}^{Hi},H)&\lesssim \jap{t}^2\norm{(-\Delta_L)^{-1}F_{\neq}}_{H^8}\norm{\sqrt{\frac{k^2}{p}}mG_{\neq}}_{L^2}\norm{\sqrt{\frac{\de_t m^d}{m^d}}H}_{L^2}\\
 &\lesssim \e^{-\delta_0\nu^\frac13 t}\norm{mF_{\neq}}_{L^2}\norm{\sqrt{\frac{k^2}{p}}mG_{\neq}}_{L^2}\norm{\sqrt{\frac{\de_t m^d}{m^d}}H}_{L^2},
 \end{align}
 which is consistent with \eqref{bd:Tsimneqneq}.
 
 \medskip
 \noindent $\square$ \textit{Control of the high-low term}. By the definition of \eqref{def:cTsym} and a change of variables, observe that 
 \begin{equation}
 \cT_{\sfsym}(F_{\neq}^{Hi},G_{\neq}^{Lo},H)= \cT_{\sfsym}(\Delta_LG_{\neq}^{Lo}, \Delta_L^{-1}F_{\neq}^{Hi},H).
 \end{equation}
 Writing down this term explicitly, we obtain the bound 
 \begin{align}
 \cT_{\sfsym}(\Delta_LG_{\neq}^{Lo}, \Delta_L^{-1}F_{\neq}^{Hi},H)&\lesssim\sum_{k,\ell \in \mathbb{Z}}\iint_{\RR^2}\sqrt{\frac{k^2}{p_k(t,\eta)}}m_k(t,\eta)|k-\ell,\eta-\xi||\hat{G}^{Lo}_{\neq}|_{k-\ell}(\eta-\xi)\\
 \label{def:TRHLsym}&\qquad \times (\mathbbm{1}_{\{t\in I_{\ell,\xi}\}}+\mathbbm{1}_{\{t\notin I_{\ell,\xi}\}})\frac{|\ell,\xi|}{p_{\ell}(t,\xi)} |\hat{F}^{Hi}|_{\ell}(\xi)|\hat{H}|_{k}(\eta) \dd \eta\dd \xi\\
& :=\cJ^{R}+\cJ^{NR},
 \end{align}
 where $\cJ^{R}$ is the integral containing $t\in\mathbbm{1}_{\{t\in I_{\ell,\xi}\}}$ and $\cJ^{NR}$ the other one. Notice that with the change of variables we now have $\jap{|k,\eta|}\leq 3\jap{\ell,\xi}/2$. When $t\in I_{\ell,\xi}$, since $m^d,m^\nu,m^s$ are bounded Fourier multipliers,  we observe that 
 \begin{equation}
\mathbbm{1}_{\{t\in I_{\ell,\xi}\}}\frac{|\ell,\xi|}{p_{\ell}(t,\xi)}m_k(t,\eta)\lesssim \mathbbm{1}_{\{t\in I_{\ell,\xi}\}}\frac{|\xi|}{|\ell|^2}\frac{1}{1+|\frac{\xi}{\ell}-t|^2}\lesssim \jap{t}\sqrt{\frac{\de_tm^d_\ell(t,\xi)}{m^d_\ell(t,\xi)}}\sqrt{\frac{\ell^2}{p_\ell(t,\xi)}}m_\ell(t,\xi).
 \end{equation}
 Since $\sqrt{k^2/p}\approx\sqrt{\de_tm^d/m^d}$, moving this factor to $H$, we then deduce the bound 
 \begin{align}
 \mathcal{J}^R\lesssim \jap{t}\e^{-\delta_0\nu^\frac13t}\norm{mG_{\neq}}_{L^2}\norm{\sqrt{\frac{\de_tm^d}{m^d}}\sqrt{\frac{k^2}{p}}mF_{\neq}}_{L^2}\norm{\sqrt{\frac{\de_tm^d}{m^d}}H}_{L^2}.
 \end{align}
 When $t\notin I_{\ell,\xi}$ we have $|\xi/\ell-t|\gtrsim |\xi|/|\ell|^2$, hence 
  \begin{equation}
\mathbbm{1}_{\{t\notin I_{\ell,\xi}\}}\frac{|\ell,\xi|}{p_{\ell}(t,\xi)}m_k(t,\eta)\lesssim \mathbbm{1}_{\{t\notin I_{\ell,\xi}\}}\frac{|\xi|}{|\ell|^2}\frac{1}{1+|\frac{\xi}{\ell}-t|^2}m_\ell(t,\xi)\lesssim\sqrt{\frac{\ell^2}{p_\ell(t,\xi)}}m_\ell(t,\xi).
 \end{equation}
Thus
  \begin{align}
 \mathcal{J}^{NR}\lesssim \e^{-\delta_0\nu^\frac13t}\norm{mG_{\neq}}_{L^2}\norm{\sqrt{\frac{k^2}{p}}mF_{\neq}}_{L^2}\norm{\sqrt{\frac{\de_tm^d}{m^d}}H}_{L^2}.
 \end{align}
The bound \eqref{bd:Tsimneqneq} is proved.

\medskip 

$\bullet$ \textit{Proof of \eqref{bd:Tsim0neq}:} first we notice that 
\begin{equation}
(\nabla^\perp\Delta_L^{-1}F)_0\cdot\nabla G_{\neq}=V_{F,0}^1\de_XG_{\neq}.
\end{equation}
Hence
\begin{align}
\cT_{\sfsym}(F_0,G_{\neq},H)\leq \sum_{k\in \ZZ}\iint_{\RR^2}\sqrt{\frac{k^2}{p_k(t,\eta)}}m_k(t,\eta)|\hat{V}^1_{F,0}|(\eta-\xi)|k||\hat{G}_{\neq}|_k(\xi)|\hat{H}|_k(\eta)\dd \eta \dd \xi.
\end{align}
Since $\jap{|k,\eta|}\lesssim \jap{|k,\xi|}+\jap{\eta-\xi}$ and $m^d,m^\nu,m^s$ are uniformly bounded, we deduce that 
\begin{equation}
m_k(t,\eta)\lesssim m_k(t,\xi)+\frac{\jap{\eta-\xi}^N}{\jap{|k,\xi|}^N}m_k(t,\xi).
\end{equation}
Hence
\begin{align}
&\cT_{\sfsym}(F_0,G_{\neq},H)\leq \,\sum_{k\in \ZZ}\iint_{\RR^2}\sqrt{\frac{k^2}{p_k(t,\eta)}}|\hat{V}^1_{F,0}|(\eta-\xi)|k||m(t)\hat{G}_{\neq}|_k(\xi)|\hat{H}|_k(\eta)\dd \eta \dd \xi \\
&\quad +\sum_{k\in \ZZ}\iint_{\RR^2}\sqrt{\frac{k^2}{p_k(t,\eta)}}\jap{\eta-\xi}^N|\hat{V}^1_{F,0}|(\eta-\xi)\frac{|k|}{\jap{|k,\xi|}^N}|m(t)\hat{G}_{\neq}|_k(\xi)|\hat{H}|_k(\eta)\dd \eta \dd \xi\\
&:=\cI_1+\cI_2.
\end{align}
Using \eqref{eq:pkp} and the definition of $m^d$,  we deduce 
\begin{equation}
\sqrt{\frac{k^2}{p_k(t,\eta)}}\leq \left(1+|\eta-\xi|\frac{1}{1+|\frac{\eta}{k}-t|}\right)\sqrt{\frac{k^2}{p_k(t,\xi)}}\lesssim\left(1+|\eta-\xi|\sqrt{\frac{\de_tm^d_k(t,\eta)}{m^d_k(t,\eta)}}\right)\sqrt{\frac{k^2}{p_k(t,\xi)}}.
\end{equation}
Hence
\begin{align}
\cI_2\lesssim \, &\norm{V_{F,0}^1}_{H^N}\norm{\sqrt{\frac{k^2}{p}}\jap{\cdot}^{-N}m\de_X G_{\neq}}_{H^3}\norm{H}_{L^2}\\
&+\norm{\de_YV_{F,0}^1}_{H^N}\norm{\sqrt{\frac{k^2}{p}}m G_{\neq}}_{L^2}\norm{\sqrt{\frac{\de_t m^d}{m^d}}H}_{L^2},
\end{align}
which is in agreement with \eqref{bd:Tsim0neq} since $N>10$. On $\cI_1$ we can pay regularity on $V_{F,0}^1$ and obtain the  bound 
\begin{align}
\cI_1\lesssim \, \norm{V_{F,0}^1}_{H^3}\norm{\sqrt{\frac{k^2}{p}}m\de_X G_{\neq}}_{L^2}\norm{H}_{L^2},
\end{align}
so \eqref{bd:Tsim0neq} is proved.

\medskip 

$\bullet$ \textit{Proof of \eqref{bd:Tsimneq0}:} in this case we have 
\begin{equation}
(\nabla^\perp\Delta_L^{-1}F)_{\neq}\cdot\nabla G_{0}=(\de_X\Delta_L^{-1}F_{\neq})\de_YG_{0}.
\end{equation}
 Then we do a paraproduct decomposition to get
\begin{align}
\cT_{\sfsym}(F_{\neq},G_0,H)\leq\cT_{\sfsym}(F_{\neq}^{Hi},G_0^{Lo},H) + \cT_{\sfsym}(F_{\neq}^{Lo},G_0^{Hi},H).
\end{align}
For the high-low term, we move the factor $\sqrt{k^2/p}$ on $H$ and use  \eqref{bd:pneq0} to get
\begin{equation}
\cT_{\sfsym}(F_{\neq}^{Hi},G_0^{Lo},H)\lesssim \jap{\bigg( \sqrt{\frac{\de_tm^d}{m^d}}\sqrt{\frac{k^2}{p}}m|\hat{F}_{\neq}^{Hi}|*|\cF(\de_YG_0^{Lo})|\bigg),\sqrt{\frac{\de_tm^d}{m^d}}|H|}.
\end{equation}
Applying Cauchy-Schwartz and Young's convolution inequality we get a bound in agreement with \eqref{bd:Tsimneq0}. 

For the low-high term instead, we need to be careful in order to recover time-decay from $\de_X\Delta_L^{-1}$. This is because $x$-derivatives can be high in the $F^{Lo}$ piece since $G_0$ is concentrated on the zero $x$-frequencies. We then argue as follows: since $\xi$ is the high-frequency, we have
\begin{equation}
\jap{|k,\eta|}^N\lesssim  \jap{k}^N+\jap{\eta}^N\lesssim \jap{|k,\eta-\xi|}^N+\jap{\xi}^N.
\end{equation}
This implies
\begin{equation}
m_k(t,\eta)\lesssim m_k(t,\eta-\xi)+\frac{\jap{\xi}^N}{\jap{|k,\eta-\xi|}^N}m_k(t,\eta-\xi).
\end{equation}
Hence, using \eqref{bd:pneq0} we deduce that
\begin{align}
\cT_{\sfsym}(F_{\neq}^{Lo},G_0^{Hi},H)\lesssim \cJ_1+\cJ_2
\end{align}
where 
\begin{align}
& \cJ_1:=\sum_{k}\iint \left( \sqrt{\frac{\de_tm^d_k(t)}{m^d_k(t)}}\sqrt{\frac{k^2}{p_k(t)}}m_k(t)|\hat{F}^{Lo}_{\neq}|_{k}\right)(\eta-\xi)|\xi||\hat{G}_0^{Hi}|(\xi)\left(\sqrt{\frac{\de_tm^d_k(t)}{m^d_k(t)}}|\hat{H}|_k\right)(\eta)\dd \eta \dd \xi\\
&\cJ_2:=\sum_{k}\iint \frac{|k|m_k(t,\eta-\xi)}{\jap{|k,\eta-\xi|}^Np_k(t,\eta-\xi)}|\hat{F}^{Lo}_{\neq}|_{k}(\eta-\xi)\jap{\xi}^N|\xi||\hat{G}_0^{Hi}|(\xi)\left(\sqrt{\frac{\de_tm^d_k(t)}{m^d_k(t)}}|\hat{H}|_k\right)(\eta)\dd \eta \dd \xi
\end{align}
For $\cJ_1$ it is not difficult to get 
\begin{equation}
\cJ_1\lesssim \norm{G_0}_{H^{3}}\norm{\sqrt{\frac{\de_tm^d}{m^d}}\sqrt{\frac{k^2}{p}}mF_{\neq}}_{L^2}\norm{\sqrt{\frac{\de_t m^d}{m^d}}H}_{L^2}.
\end{equation}
For $\cJ_2$ instead, we have 
\begin{align}
\cJ_2&\lesssim \norm{\de_X(-\Delta_L)^{-1}m F_{\neq}}_{H^{-N+2}}\norm{\de_YG_0}_{H^N}\norm{\sqrt{\frac{\de_t m^d}{m^d}}H}_{L^2}\\
&\lesssim \frac{1}{\jap{t}^2}\norm{mF_{\neq}}_{L^2}\norm{\de_YG_0}_{H^N}\norm{\sqrt{\frac{\de_t m^d}{m^d}}H}_{L^2},
\end{align}
where in the last line we used \eqref{lem:elliptic} and $N>10$. The bound \eqref{bd:Tsimneq0} is then proved.

\medskip

$\bullet$ \textit{Proof of \eqref{bd:Ssimneqneq}:} notice that, since we have $\de_tp/p$ in front of $\hat{F}$, we always have $F_{\neq}$. It is also enough to prove the bound for 
\begin{equation}
\cS_{\sfsym}^1(F,G_\neq,H)= \left|\jap{\sqrt{\frac{k^2}{p}}m\left(\frac{\de_t p}{p}\hat{F}_{\neq}*G_{\neq}\right),H}\right|.
\end{equation}
Using that $|\de_t p/p|\leq 2 \sqrt{k^2/p}$ and the algebra property of $H^N$, we get 
\begin{equation}
\cS_{\sfsym}^1(F,G_\neq,H)\lesssim \e^{-\delta_0\nu^\frac13t}\norm{\sqrt{\frac{k^2}{p}}mF_{\neq}}_{L^2}\norm{mG_{\neq}}_{L^2}\norm{\sqrt{\frac{\de_tm^d}{m^d}}H}_{L^2},
\end{equation}
whence proving \eqref{bd:Ssimneqneq}.

\medskip 

$\bullet$ \textit{Proof of \eqref{bd:Ssimneq0}:} in this case $\cS=\cS^1$ defined above. Then, analogously to what we have done to treat $\cT_{\sfsym}(F_\neq,G_0,H)$, we use the paraproduct decomposition first 
\begin{equation}
\cS_{\sfsym}^1(F,G_0,H)\leq \cS_{\sfsym}^1(F^{Hi},G_0^{Lo},H)+\cS_{\sfsym}^1(F^{Lo},G_0^{Hi},H).
\end{equation}
For the high-low piece, we can proceed as done in the proof of \eqref{bd:Ssimneqneq} to get 
\begin{equation}
\label{bd:SsymHL}
\cS_{\sfsym}^1(F^{Hi},G_0^{Lo},H)\lesssim\norm{\sqrt{\frac{k^2}{p}}mF_{\neq}}_{L^2}\norm{G_{0}}_{H^3}\norm{\sqrt{\frac{\de_tm^d}{m^d}}H}_{L^2}
\end{equation}
For the low-high piece, we argue as done for the low-high term in the proof of \eqref{bd:Tsimneq0}. Namely, we can split the derivatives with higher-order in $x$ and $y$. In the first case, namely the term corresponding to $\cJ_1$ in the proof of \eqref{bd:Tsimneq0}, we argue as done for the low-high term and we prove the same bound as in \eqref{bd:SsymHL}. In the other case, we proceed as done for  $\cJ_2$ in the proof of \eqref{bd:Tsimneq0}. Overall, we get 
\begin{align}
\label{bd:SsymLH}
\cS_{\sfsym}^1(F^{Lo},G_0^{Hi},H)\lesssim\,&\norm{\sqrt{\frac{k^2}{p}}mF_{\neq}}_{L^2}\norm{G_{0}}_{H^3}\norm{\sqrt{\frac{\de_tm^d}{m^d}}H}_{L^2}\\
&+ \norm{\frac{\de_tp}{p}m F_{\neq}}_{H^{-N+2}}\norm{G_0}_{H^N}\norm{\sqrt{\frac{\de_t m^d}{m^d}}H}_{L^2}.
\end{align}
Having that $\de_tp\leq \jap{t}\jap{|k,\eta|}^2$, using again Lemma \ref{lem:elliptic}  we have
\begin{equation}
 \norm{\frac{\de_tp}{p}m F_{\neq}}_{H^{-N+2}}\lesssim \jap{t}\norm{(-\Delta_L)^{-1}mF_{\neq}}_{H^{-N+4}}\lesssim\frac{1}{\jap{t}}\norm{mF_{\neq}}_{H^{-N+6}},
\end{equation}
which proves \eqref{bd:Ssimneq0} since $N>6$.

\end{proof}

\subsection{Bounds for the higher order energy}
The structure of the proof for the higher--order energy is analogous to what we have done for $\sfE_{\sfsym}$. However, bounds will be simpler because we do not have to exchange frequencies for the unbounded multiplier $\sqrt{k^2/p}$. We define the transport and stretching nonlinear terms as
\begin{align}
\label{def:cTho}
&\cT_{\sfho}(F,G,H):=\left|\jap{m\cF\left(\nabla^\perp\Delta^{-1}_L F\cdot \nabla G \right), \hat{H}}\right|,\\
\label{def:cSho}
&\cS_{\sfho}(F,G,H):=\left|\jap{m\bigg(\frac{\de_t p}{p}\hat{F}*\big(\hat{G}-2\frac{\ell^2}{p}\hat{G}\big)\bigg),H}\right|.
\end{align}
We have the following.
\begin{lemma}
\label{lem:keyEho}
Let $m$ be the Fourier multiplier defined in \eqref{def:m} with $N>10$. Then: 
\begin{align}
\label{bd:Thoneqneq}
\cT_{\sfho}(F_{\neq},G_{\neq},H)\lesssim \e^{-\delta_0\nu^\frac13t} \norm{\sqrt{\frac{k^2}{p}}mF_{\neq}}_{L^2}\norm{\nabla_LmG_{\neq}}_{L^2}\norm{H}_{L^2}.
\end{align}
Denoting $(\nabla^\perp\Delta_L^{-1}F)_0=(V_{F,0}^1,0)$, one has
\begin{align}
\label{bd:Tho0neq}
\cT_{\sfho}(F_{0},G_{\neq},H)\lesssim\,& \norm{V_{F,0}^1}_{H^N}\norm{m\de_X G_{\neq}}_{L^2}\norm{H_{\neq}}_{L^2}\\
\label{bd:Thoneq0}
\cT_{\sfho}(F_{\neq},G_{0},H)\lesssim\, &\norm{H}_{L^2}\bigg(\norm{G_0}_{H^{3}}\norm{\sqrt{\frac{\de_tm^d}{m^d}}\sqrt{\frac{k^2}{p}}mF_{\neq}}_{L^2}\\
&\qquad+\frac{1}{\jap{t}^2}\norm{mF_{\neq}}_{L^2}\norm{\de_YG_0}_{H^{N}}\bigg)
\end{align}
For the stretching nonlinearities we have:
\begin{align}
\label{bd:Shoneq}
&\cS_{\sfsym}(F,G_{\neq},H)\lesssim\, \e^{-\delta_0\nu^\frac13t}\norm{\sqrt{\frac{k^2}{p}}mF_{\neq}}_{L^2}\norm{mG_{\neq}}_{L^2}\norm{H}_{L^2},\\
\label{bd:Sho0}
&\cS_{\sfsym}(F,G_{0},H)\lesssim\,\left(\norm{\sqrt{\frac{k^2}{p}}mF_{\neq}}_{L^2}\norm{G_{0}}_{H^3} + \frac{1}{\jap{t}}\norm{mF_{\neq}}_{L^2}\norm{G_{0}}_{H^N}\right)\norm{H}_{L^2}.
\end{align}
\end{lemma}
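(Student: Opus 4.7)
The proof will proceed in close analogy with that of Lemma \ref{lem:keyEsym}, exploiting the simplification that no external factor of $\sqrt{k^2/p_k(t,\eta)}$ is present on the output side. In particular, we will not need to invoke the frequency-exchange estimate from Lemma \ref{lem:commp} to relocate such a factor, nor will we need to split the integrals into resonant and non-resonant pieces according to \eqref{def:resonant}. The whole argument reduces to paraproduct decompositions, application of the uniform bounds \eqref{bd:md}, \eqref{bd:mnu}, \eqref{bd:ms} on $m^d, m^\nu, m^s$, Cauchy-Schwarz and Young's convolution inequality.

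For the purely non-zero transport term \eqref{bd:Thoneqneq}, the plan is to paraproduct-decompose $\cT_{\sfho}(F_\neq, G_\neq, H) \leq \cT_{\sfho}(F_\neq^{Lo}, G_\neq^{Hi}, H) + \cT_{\sfho}(F_\neq^{Hi}, G_\neq^{Lo}, H)$. On the support of $\chi$, one has $|k,\eta| \leq 3|\ell,\xi|$, so since $m^d, m^\nu, m^s$ are uniformly bounded, $m_k(t,\eta) \lesssim m_\ell(t,\xi)$ and the weight $m$ can be absorbed into the high-frequency factor. In the LoHi case, the velocity factor $\widehat{\nabla^\perp \Delta_L^{-1} F_\neq^{Lo}}$ carries a factor $1/\sqrt{p_{k-\ell}(t,\eta-\xi)}$, which on $F_\neq$ we rewrite as $|k-\ell|^{-1}\sqrt{(k-\ell)^2/p_{k-\ell}(t,\eta-\xi)}$; the gradient $\nabla G_\neq^{Hi}$ yields exactly $|\nabla_L m G_\neq^{Hi}|$ in the moving frame Fourier variables. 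Paying a few derivatives on the low-frequency piece via $\jap{|k-\ell,\eta-\xi|}^c \e^{-\delta_0 \nu^{1/3}t} \lesssim m_{k-\ell}(t,\eta-\xi)$, which is valid since $N>10$, and applying Young's inequality gives the desired bound. The HiLo term is handled after a change of variables $(k,\ell,\eta,\xi) \mapsto (k, k-\ell, \eta, \eta-\xi)$, exactly as in the proof of \eqref{bd:Tsimneqneq}, but without the resonant/non-resonant dichotomy.

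For the zero-mode transport terms \eqref{bd:Tho0neq} and \eqref{bd:Thoneq0}, the strategy mirrors the symmetric case. For $\cT_{\sfho}(F_0, G_\neq, H)$ one writes $(\nabla^\perp \Delta_L^{-1} F)_0 \cdot \nabla G_\neq = V_{F,0}^1 \de_X G_\neq$ and splits $m_k(t,\eta) \lesssim m_k(t,\xi) + \jap{\eta-\xi}^N \jap{|k,\xi|}^{-N} m_k(t,\xi)$ to direct either zero or up to $N$ derivatives in $Y$ onto $V_{F,0}^1$; the remaining convolution is controlled by Young's inequality. For $\cT_{\sfho}(F_\neq, G_0, H)$, where $(\nabla^\perp \Delta_L^{-1} F_\neq)_\neq \cdot \nabla G_0 = (\de_X \Delta_L^{-1} F_\neq)(\de_Y G_0)$, a paraproduct decomposition is performed: in the HiLo term one uses \eqref{bd:pneq0} to recover $\sqrt{\de_t m^d/m^d}\sqrt{k^2/p}$ from $\de_X \Delta_L^{-1}$ as in the treatment of the analogous term in Lemma \ref{lem:keyEsym}, while in the LoHi term one splits $\jap{|k,\eta|}^N \lesssim \jap{|k,\eta-\xi|}^N + \jap{\xi}^N$ to isolate the contribution where $y$-regularity concentrates on $G_0$, then invokes Lemma \ref{lem:elliptic} to extract the $1/\jap{t}^2$ factor. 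The stretching bounds \eqref{bd:Shoneq}, \eqref{bd:Sho0} follow the same pattern as in \eqref{bd:Ssimneqneq}, \eqref{bd:Ssimneq0}: the key input is $|\de_t p|/p \leq 2\sqrt{k^2/p}$, the algebra property of $H^N$ for \eqref{bd:Shoneq}, and for \eqref{bd:Sho0} a paraproduct split in which the low-high piece with large $y$-derivatives on $G_0$ is converted into $1/\jap{t}$ decay via Lemma \ref{lem:elliptic} applied to $\de_t p / p$.

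The main bookkeeping issue, rather than a true obstacle, is ensuring that the exponential factor $\e^{-\delta_0 \nu^{1/3}t}$ advertised in \eqref{bd:Thoneqneq} and \eqref{bd:Shoneq} really materializes after the weight transfer. This is precisely the place where the requirement $N > 10$ enters: the small polynomial loss in frequency incurred by swapping $m_k(t,\eta)$ with $m_\ell(t,\xi)$ (or by inverting Lemma \ref{lem:elliptic}) must be absorbed into a Sobolev norm of the input at regularity $N - c$, and the surplus of regularity is then traded for the $\e^{-\delta_0 \nu^{1/3}t}$ factor via $\jap{|k,\eta|}^{c} \e^{-\delta_0\nu^{1/3}t} \lesssim m_k(t,\eta)$. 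Once this is accounted for, each of the five estimates reduces to a direct application of Cauchy-Schwarz and Young's convolution inequality, and the proof is complete.
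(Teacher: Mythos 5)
Your overall plan is sound and, for \eqref{bd:Tho0neq}--\eqref{bd:Sho0}, it coincides with the paper's proof, which simply transplants the arguments for \eqref{bd:Tsim0neq}--\eqref{bd:Ssimneq0} (there the external factor $\sqrt{k^2/p}$ was only ever moved onto $H$, so dropping it changes nothing). Two small points: in \eqref{bd:Tho0neq} the right-hand side carries $\norm{H_\neq}_{L^2}$, which requires the (trivial, but later used) observation that $V^1_{F,0}\,\de_XG_\neq$ has no zero $x$-mode, so the pairing against $H_0$ vanishes; and the exponential trade should read $\jap{|k,\eta|}^c\lesssim\e^{-\delta_0\nu^{1/3}t}m_k(t,\eta)$ for $c\leq N$, as in the paper, not $\jap{|k,\eta|}^c\e^{-\delta_0\nu^{1/3}t}\lesssim m_k(t,\eta)$, otherwise the prefactor $\e^{-\delta_0\nu^{1/3}t}$ does not come out of the bookkeeping.

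Where you genuinely diverge is \eqref{bd:Thoneqneq}. The paper needs no paraproduct at all: it uses the cancellation $\nabla^\perp\Delta_L^{-1}F\cdot\nabla G=\nabla_L^\perp\Delta_L^{-1}F\cdot\nabla_L G$, the bound $\norm{\nabla_L\Delta_L^{-1}F_\neq}_{L^2}\lesssim\norm{\sqrt{k^2/p}\,F_\neq}_{L^2}$ (valid since $|k|\geq 1$ on nonzero modes), and the algebra property of $H^N$, and is done in two lines. Your paraproduct route can reach the same bound, but only if you use this cancellation throughout --- your stated low-high symbols $1/\sqrt{p_{k-\ell}}$ and $\sqrt{p_\ell}$ already presuppose it --- and then the high-low piece is just the weight transfer $m_k(t,\eta)\lesssim m_{k-\ell}(t,\eta-\xi)$ plus Cauchy--Schwarz and Young, with no change of variables. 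Doing it ``exactly as in the proof of \eqref{bd:Tsimneqneq} but without the resonant/non-resonant dichotomy'' would not work as written: that argument keeps the plain symbols $|k-\ell,\eta-\xi|$ and $|\ell,\xi|/p_\ell(t,\xi)$, and the latter is of size $\jap{t}$ near the critical time; the dichotomy, together with the external $\sqrt{k^2/p}$ converted into $\sqrt{\de_tm^d/m^d}$ on $H$, is precisely what absorbs that loss in the symmetric case, and neither a $\jap{t}$ factor nor a $\sqrt{\de_tm^d/m^d}$ weight on $H$ is available in \eqref{bd:Thoneqneq}. So the proposal is correct in substance, heavier than necessary for \eqref{bd:Thoneqneq}, and the high-low step should be restated along the lines above.
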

\begin{proof}[Proof of Lemma \ref{lem:keyEho}]
To prove \eqref{bd:Thoneqneq}, we simply observe  that
\begin{equation}
\nabla^\perp \Delta_L^{-1} F_{\neq}\cdot \nabla G_{\neq}=\nabla^\perp_L \Delta_L^{-1} F_{\neq}\cdot \nabla_L G_{\neq}, \qquad \text{and} \qquad \norm{\nabla_L\Delta_L^{-1}F_{\neq}}_{L^2}\lesssim \norm{\sqrt{\frac{k^2}{p}}F_{\neq}}_{L^2}.
\end{equation}
Being $m^d,m^\nu,m^s$ bounded Fourier multipliers, using the algebra property of $H^N$ we deduce that 
\begin{align}
\cT_{\sfho}(F_{\neq},G_{\neq},H)&\lesssim\e^{-\delta_0\nu^\frac13t}\norm{\nabla_L\Delta_L^{-1}mF_{\neq}}_{L^2}\norm{\nabla_LmG_{\neq}}_{L^2}\norm{H}_{L^2}
\end{align}
whence proving \eqref{bd:Thoneqneq}.

Turning our attention to \eqref{bd:Tho0neq}, we first observe that
\begin{align}
\cT_{\sfho}(F_{0},G_{\neq},H)=\left|\jap{m\cF\left(V_{F,0}^1\de_X G_{\neq} \right), \hat{H}_{\neq}}\right|
\end{align}
since $\jap{m\cF\left(V_{F,0}^1\de_X G_{\neq} \right), \hat{H}_{0}}=-\jap{V_{F,0}^1G , m\de_X H_{0}}=0$. The proof \eqref{bd:Tho0neq} easily follows as an application of Cauchy-Schwartz and Young's inequality.

The proof of the bounds \eqref{bd:Thoneq0}--\eqref{bd:Sho0} is identical to the ones for \eqref{bd:Tsimneq0}--\eqref{bd:Ssimneq0}. This is because in the latter bounds we have only moved the factor $\sqrt{k^2/p}$ on the function $H$.
\end{proof}
With Lemma \eqref{lem:keyEho} at hand, we show how to improve  \eqref{Bho}.
\begin{proof}[Proof: improvement of \eqref{Bho}]
For the transport nonlinearity \eqref{def:Tho}, recall that
\begin{equation}
\norm{m(\Omega_{\neq},J_{\neq})}_{L^2}\lesssim \nu^{-\frac16}\sqrt{\sfD_{\sfho}}.
\end{equation}
Hence, combining \eqref{bd:Thoneqneq}--\eqref{bd:Thoneq0}, since $\sqrt{k^2/p}(\hat{\Omega},\hat{J})=(Z,Q)$, we have
\begin{align}
\sfT_{\sfho}\lesssim\,& \nu^{-\frac12}\e^{-\delta_0 \nu^\frac13t}\sqrt{\sfE_{\sfsym}}\sqrt{\sfE_{\sfho}}\sqrt{\sfD_{\sfho}}+\nu^{-\frac12-\frac16}\sqrt{\sfE_0}\sfD_{\sfho}\\
&\, +\left(\sqrt{\sfE_{0}}\sqrt{\sfD_{\sfsym}}+\frac{1}{\jap{t}}\nu^{-\frac16}\sqrt{\sfD_{\sfho}}\sqrt{\sfD_0}\right)\sqrt{\sfE_{\sfho}}.
\end{align}
Similarly, from \eqref{bd:Shoneq}--\eqref{bd:Sho0}, using that 
\begin{align}
\frac{1}{\jap{t}}&\norm{m(\Omega_\neq,J_\neq)}_{L^2}\norm{(\Omega_0,J_0)}_{H^N}\norm{m(\Omega,J)}_{L^2}\\
&=\frac{1}{\jap{t}}\norm{m(\Omega_\neq,J_\neq)}_{L^2}\norm{\de_Y(U_0^1,B_0^1)}_{H^N}\norm{m(\Omega,J)}_{L^2}\lesssim \frac{1}{\jap{t}}\nu^{-\frac16-\frac12}\sqrt{\sfD_{\sfho}}\sqrt{\sfD_{0}}\sqrt{\sfE_{\sfho}}
\end{align}
we have 
\begin{align}
\sfS_{\sfho}\lesssim\,& \nu^{-\frac16}\e^{-\delta_0 \nu^\frac13t}\sqrt{\sfE_{\sfsym}}\sqrt{\sfE_{\sfho}}\sqrt{\sfD_{\sfho}}+\sqrt{\sfE_{\sfho}}\sqrt{\sfE_{\sfsym}}\sqrt{\sfE_{0}}+\frac{1}{\jap{t}}\nu^{-\frac23}\sqrt{\sfD_{\sfho}}\sqrt{\sfD_{0}}\sqrt{\sfE_{\sfho}}.
\end{align}
Integrating in time and applying the bootstrap hypothesis, we get 
\begin{align}
\int_0^t\sfT_{\sfho}+\sfS_{\sfho}\dd \tau\lesssim\, &\eps^2 \nu^{-\frac12}\int_0^t \jap{\tau}\e^{-\delta_0\nu^\frac13\tau}\sqrt{\sfD_{\sfho}}\dd \tau+\eps\nu^{-\frac23}\int_0^t\sfD_{\sfho}\dd \tau\\
&+\eps^2\int_0^t \jap{\tau}\sqrt{\sfD_{\sfsym}}\dd \tau+\eps \nu^{-\frac16}\int_0^t \sqrt{\sfD_{\sfho}}\sqrt{\sfD_0}\dd \tau\\
&+\eps^3\int_0^t\jap{\tau}\dd \tau+\eps\nu^{-\frac23}\int_0^t\sqrt{\sfD_{\sfho}}\sqrt{\sfD_{0}}\dd \tau.
\end{align}
Applying Cauchy-Schwartz inequality and the bootstrap hypotheses, we get 
\begin{align}
&\eps^2 \nu^{-\frac12}\int_0^t \jap{\tau}\e^{-\delta_0\nu^\frac13\tau}\sqrt{\sfD_{\sfho}}\dd \tau\lesssim \eps^{3}\jap{t}\nu^{-\frac12}\left(\int_0^t\jap{\tau}^2\e^{-2\delta_0\nu^\frac13\tau}\dd\tau\right)^\frac12\lesssim(\eps\nu^{-\frac23})\eps^{2}\jap{t}^2,\\
&\eps^2\int_0^t \jap{\tau}\sqrt{\sfD_{\sfsym}}\dd \tau\lesssim \eps^3 \jap{t}^\frac32,\\
&\eps \nu^{-\frac16}\int_0^t \sqrt{\sfD_{\sfho}}\sqrt{\sfD_0}\dd \tau\lesssim (\eps \nu^{-\frac16})\eps^2\jap{t},\\
&\eps\nu^{-\frac23}\int_0^t\sqrt{\sfD_{\sfho}}\sqrt{\sfD_{0}}\dd \tau\lesssim (\eps \nu^{-\frac23})\eps^2\jap{t}.
\end{align}
Integrating in time \eqref{bd:dtEho}, using again the bootstrap hypothes, when $\eps\ll \nu^\frac23$ we get 
\begin{align}
\sfE_{\sfho}+\frac{1}{16}\int_0^t \sfD_{\sfho}\dd \tau&\leq 4\int_0^t\sqrt{\sfE_{\sfsym}}\sqrt{\sfE_{\sfho}}\dd \tau+\eps^2\jap{t}^2\leq 4\sqrt{10}\sqrt{C_1}\eps^2\int_0^t\jap{t}\dd \tau+\eps^2\jap{t}^2\\
&\leq (8\sqrt{10}\sqrt{C_1}+1)\eps^2\jap{t}^2.
\end{align}
It is then enough that 
\begin{equation}
8\sqrt{10}\sqrt{C_1}+1\leq \frac{C_1}{2},
\end{equation}
which is certainly true for $C_1=4000$.
\end{proof}

\subsection{Bounds on the zero modes.} We finally show how to improve \eqref{B0}.
\begin{proof} Using the uniform boundedness of $m^d,m^\nu,m^s$ and \eqref{bd:pneq0}, we first observe that 
\begin{align}
\label{bd:U2}&\norm{(U^2_{\neq},B^2_{\neq})}_{H^N}=\norm{\de_X\Delta_L^{-1}(\Omega_{\neq},J_{\neq})}_{H^N}\lesssim \e^{-\delta_0\nu^\frac13t} \norm{\sqrt{\frac{\de_tm^d}{m^d}}m(Z,Q)}_{L^2},\\
\label{bd:U1}&\norm{\de_X(U^1_{\neq},B^1_{\neq})}_{H^N}=\norm{\de_X(\de_Y-t\de_X)\Delta_L^{-1}(\Omega_{\neq},J_{\neq})}_{H^N}\lesssim \e^{-\delta_0\nu^\frac13t} \norm{m(Z,Q)}_{L^2}.
\end{align}
where we also used $|\de_t p/p|\lesssim \sqrt{k^2/p}$. Applying Cauchy-Schwartz and the algebra property of $H^N$ we see that we can bound $\sfR_{\neq}$ in \eqref{def:Rneq} by 
\begin{align}
\sfR_\neq\lesssim \,&\norm{(U^2_{\neq},B^2_{\neq})}_{H^N}\norm{(U^1_{\neq},B^1_{\neq})}_{H^N}\norm{\de_Y(U_0^1,B_0^1)}_{H^N}\\
&+\frac{1}{\jap{t}^2}\norm{(U^2_{\neq},B^2_{\neq})}_{H^N}\norm{(\Omega_{\neq},J_{\neq})}_{H^N}\norm{\de_Y(\Omega_0,J_0)}_{H^N}\\
&+\frac{1}{\jap{t}^2}\norm{\de_X(U^1_{\neq},B^1_{\neq})}_{H^N}\norm{(\Omega_{\neq},J_{\neq})}_{H^N}\norm{J_0}_{H^N}.
\end{align}
Combining the bound above with \eqref{bd:U2}--\eqref{bd:U1} and using the boostrap hypothesis, we get
\begin{align}
\sfR_{\neq}&\lesssim \e^{-2\delta_0\nu^\frac13t} \bigg(\nu^{-\frac12}\sqrt{\sfE_{\sfsym}}\sqrt{\sfD_{\sfsym}}\sqrt{\sfD_{0}}+\frac{\nu^{-\frac12}}{\jap{t}}\sqrt{\sfE_{\sfho}}\sqrt{\sfD_{\sfsym}}\sqrt{\sfD_{0}}+\frac{1}{\jap{t}}\sqrt{\sfE_{\sfsym}}\sqrt{\sfE_{\sfho}}\sqrt{\sfE_{0}}\bigg)\\
&\lesssim (\eps\nu^{-\frac12})(\sfD_{\sfsym}+\sfD_{0})+\eps^3\e^{-2\delta_0\nu^\frac13t}.
\end{align} 
Integrating \eqref{bd:dtE0} in time, we get 
\begin{equation}
\sfE_0+\int_0^t\sfD_0\dd \tau\leq \eps^2(1+C(\eps \nu^{-\frac12}+\eps\nu^{-\frac13})),
\end{equation}
for some constant $C>0$. Therefore, when $\eps \ll \nu^\frac23$ we see that we can improve from the constant $100$ in \eqref{B0} to $50$ as desired, whence completing the proof of Proposition \ref{prop:boot}. 
\end{proof}

\subsection*{Acknowledgments} The author would like to thank Ruizhao Zi for sharing their result \cite{chen23}. The research of MD was supported by the SNSF Grant 182565, by the Swiss State
Secretariat for Education, Research and lnnovation (SERI) under contract number M822.00034 and by the GNAMPA-INdAM . 
\bibliographystyle{siam}
\bibliography{MHDbiblio}

\end{document}